\let\origvec\vec
\let\vec\origvec
\title{\solverref~-- A Solver for Linear Complementarity Quadratic Programs %
  \thanks{This research was supported by DFG via Research Unit FOR 2401 and project 424107692 and by the EU via ELO-X 953348.}
}%
\titlerunning{\solverref~-- A Solver for Linear Complementarity Quadratic Programs}
\authorrunning{Jonas Hall et al.}
\author{Jonas~Hall$^{1,2}$ \and Armin~\Nurkanovic$^{3}$ \and Florian~Messerer$^{3}$ \and Moritz~Diehl$^{1,3}$%
}%
\institute{%
    $^{1}$Department of Mathematics, University of Freiburg, Germany \\ %
    $^{2}$Division of Systems Engineering, Boston University, USA (present affiliation) \\ %
    $^{3}$Department of Microsystems Engineering (IMTEK), University of Freiburg, Germany
}%
\date{\today}
\newcommand{\secref}[1]{Section~\ref{#1}}
\newcommand{\lemref}[1]{Lemma~\ref{#1}}
\newcommand{\theoref}[1]{Theorem~\ref{#1}}
\renewcommand{\eqref}[1]{(\ref{#1})}
\newcommand{\figref}[1]{Figure~\ref{#1}}
\newcommand{\tabref}[1]{Table~\ref{#1}}
\newcommand{\algoref}[1]{Algorithm~\ref{#1}}
\DeclareMathOperator*{\minimize}{\mathrm{minimize}}
\tikzset{
	>=stealth',
	help lines/.style={dashed, thick},
	axis/.style={<->},
	important line/.style={thick},
	connection/.style={thick, dotted},
}
\pgfplotsset{compat=1.15}
\newlength\fheight
\newlength\fwidth
\newcommand{\ra}[1]{\renewcommand{\arraystretch}{#1}}
\newcommand{\R}{\mathbb{R}}
\newcommand{\N}{\mathbb{N}}
\newcommand{\sign}{\mathrm{sgn}}
\newcommand{\epsmach}{\varepsilon_{\textrm{mach}}}
\newcommand{\expnumber}[2]{{#1}\mathrm{e}{#2}}
\newcommand{\Nurkanovic}{Nurkanovi\'{c}}
\newcommand{\cpp}{C\texttt{++}}
\newcommand{\xkj}{x_{kj}}
\newcommand{\pkj}{p_{kj}}
\newcommand{\alphakj}{\alpha_{kj}}
\newcommand{\thetakj}{\vartheta_{kj}}
\newcommand{\ellLi}{\ell_{L_i}}
\newcommand{\ellRi}{\ell_{R_i}}
\newcommand{\tomega}{\tilde{\Omega}}
\newcommand{\tA}{\tilde{A}}
\newcommand{\solverref}{LCQPow}
\newacronym[plural=LCQPs,firstplural=Linear Complementarity Quadratic Programs (LCQP)]{lcqp}{LCQP}{Linear Complementarity Quadratic Program}
\newacronym[plural=MPCCs,firstplural=Mathematical Programs with Complemenetarity Constraints (MPCC)]{mpcc}{MPCC}{Mathematical Program with Complemenetarity Constraints}
\newacronym[plural=QPs,firstplural=Quadratic Programs (QP)]{qp}{QP}{Quadratic Program}
\newacronym[plural=QCQPs,firstplural=Quadratically Constrained Quadratic Programs (QPQP)]{qcqp}{QCQP}{Quadratically Constrained Quadratic Program}
\newacronym[plural=NLPs,firstplural=Nonlinear Programs (NLP)]{nlp}{NLP}{Nonlinear Program}
\newacronym[plural=OCPs,firstplural=Optimal Control Problems (OCP)]{ocp}{OCP}{Optimal Control Problem}
\newacronym[plural=MIQPs,firstplural=Mixed Integer Quadratic Programs (MIQP)]{miqp}{MIQP}{Mixed Integer Quadratic Program}
\newacronym[plural=MINLPs,firstplural=Mixed Integer Non-Linear Programs (MINLP)]{minlp}{MINLP}{Mixed Integer Non-Linear Program}
\newacronym[plural=QPCCs,firstplural=Quadratic Programs with Complementarity Constraints (QPCC)]{qpcc}{QPCC}{Quadratic Program with Complementarity Constraints}
\newacronym[plural=QPLCCs,firstplural=Quadratic Programs with Linear Compementarity Constraints (QPLCC)]{qplcc}{QPLCC}{Quadratic Program with Linear Compementarity Constraints}
\newacronym[plural=FDIs,firstplural=Filippov Differential Inclusions (FDI)]{fdi}{FDI}{Filippov Differential Inclusion}
\newacronym{sqp}{SQP}{Sequential Quadratic Programming}
\newacronym{scp}{SCP}{Sequential Convex Programming}
\newacronym{licq}{LICQ}{Linear Independence Constraint Qualification}
\newacronym{mfcq}{MFCQ}{Mangasarian-Fromovitz Constraint Qualification}
\newacronym{kkt}{KKT}{Karush-Kuhn-Tucker}
\newcommand{\ac}[1]{\gls*{#1}}
\newcommand{\acp}[1]{\glspl*{#1}}
\newcommand{\useroption}{\texttt}
\newcommand{\maxRho}{\useroption{maxPenaltyParameter}}
\newcommand{\initialPenaltyParameter}{\useroption{initialPenaltyParameter}}
\newcommand{\penaltyUpdateFactor}{\useroption{penaltyUpdateFactor}}
\newcommand{\stationarityTolerance}{\useroption{stationarityTolerance}}
\newcommand{\complementarityTolerance}{\useroption{complementarityTolerance}}
\newcommand{\nDynamicPenalty}{\useroption{nDynamicPenalty}}
\newcommand{\etaDynamicPenalty}{\useroption{etaDynamicPenalty}}
\newcommand{\solveZeroPenaltyFirst}{\useroption{solveZeroPenaltyFirst}}
\newcommand{\printLevel}{\useroption{printLevel}}
\newcommand{\qpSolver}{\useroption{qpSolver}}
\newcommand{\maxIterations}{\useroption{maxIterations}}
\newcommand{\myqed}{\hfill~\qed}
\begin{document}

\maketitle

\begin{abstract}
    In this paper we introduce an open-source software package written in \cpp~for efficiently finding solutions to quadratic programming problems with linear complementarity constraints. These problems arise in a wide range of applications in engineering and economics, and they are challenging to solve due to their structural violation of standard constraint qualifications, and highly nonconvex, nonsmooth feasible sets. This work extends a previously presented algorithm based on a sequential convex programming approach applied to a standard penalty reformulation. We examine the behavior of local convergence and introduce new algorithmic features. Competitive performance profiles are presented in comparison to state-of-the-art solvers and solution variants in both existing and new benchmarks.
\end{abstract}
\keywords{Optimization \and Complementarity Constraints \and Sequential Convex Programming \and Hybrid Systems}

\newpage

\section{Introduction}
This paper presents the release 1.0 of LCQPow, which is a solver designed to efficiently solve \glspl{lcqp}. These problems can be expressed in the form
\begin{mini!}
	{x \in \R^n}{\frac{1}{2}x^\top Q x + g^\top x \label{min:LCQP:intro:obj}}
	{\label{min:LCQP:intro}}{}
	\addConstraint{b}{\leq A x, \label{min:LCQP:intro:A}}
	\addConstraint{0}{\leq Lx \perp Rx \geq 0, \label{min:LCQP:intro:complementarity}}
\end{mini!}
where $Q$ is assumed to be positive definite (a more detailed form matching the solver's API is later introduced in~\eqref{min:LCQP}). Note that the objective function together with constraint~\eqref{min:LCQP:intro:A} define a generic convex quadratic problem. The main difficulty of the above class arises through the nonlinear and nonconvex complementarity constraints. Their compact form~\eqref{min:LCQP:intro:complementarity} denotes the set of constraints
\begin{subnumcases}{0 \leq Lx \perp Rx \geq 0 \Longleftrightarrow}
	0 \leq Lx, \label{min:LCQP:nonneg:L} \\
	0 \leq Rx, \label{min:LCQP:nonneg:R} \\
	0 = x^\top L^\top R x. \label{min:LCQP:ortho}
\end{subnumcases}
We refer to the matrices $L$ and $R$ as the complementarity selector matrices. The pair of rows $L_i, R_i \in \R^{1 \times n}$ introduces the $i$th complementarity constraint, consisting of nonnegativity $0 \leq L_ix$, $0 \leq R_i x$ and orthogonality $x^\top L_i^\top R_i x = 0$. Note that the nonnegativity constraints~\eqref{min:LCQP:nonneg:L},~\eqref{min:LCQP:nonneg:R} make the orthogonality constraint~\eqref{min:LCQP:ortho} equivalent to complementarity satisfaction for each individual complementarity pair. The values of $L_i$ and $R_i$ define a weighted selection of the optimization variables to be present in the $i$th complementarity constraint. In the simplest case the selector matrices consist of unit vector rows, which imposes that at least one of the selected optimization variables vanishes for each complementarity pair. For example, this is the case in the two dimensional toy problem~\cite{scheel2000mathematical}
\begin{mini!}
	{x \in \R^2}{(x_1 - 1)^2 + (x_2 - 1)^2 \label{min:warm:up:objective}}
	{\label{min:warm:up}}{}
    \addConstraint{0}{\leq x_1 \perp x_2 \geq 0,}
\end{mini!}
which is illustrated in~\figref{fig:warm_up}.
\begin{figure}
    \begin{subfigure}[t]{0.5\linewidth}
        \includegraphics[width=0.9\linewidth,center]{./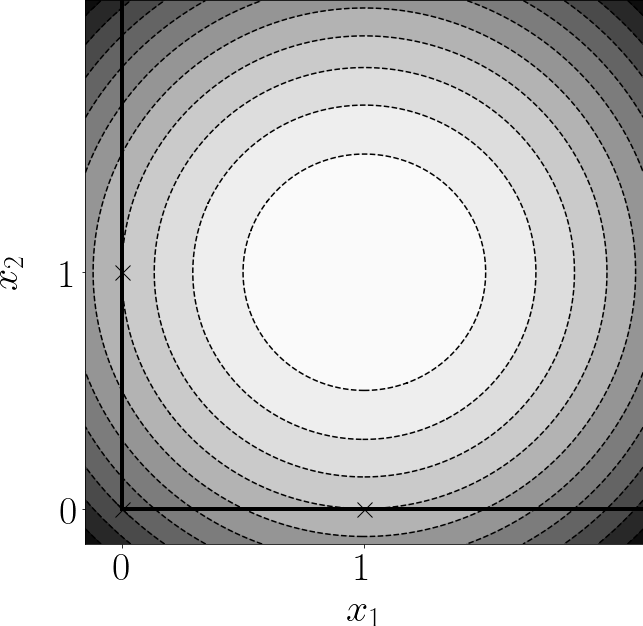}
    \end{subfigure}%
    \begin{subfigure}[t]{0.5\linewidth}
        \includegraphics[width=0.9\linewidth,center]{./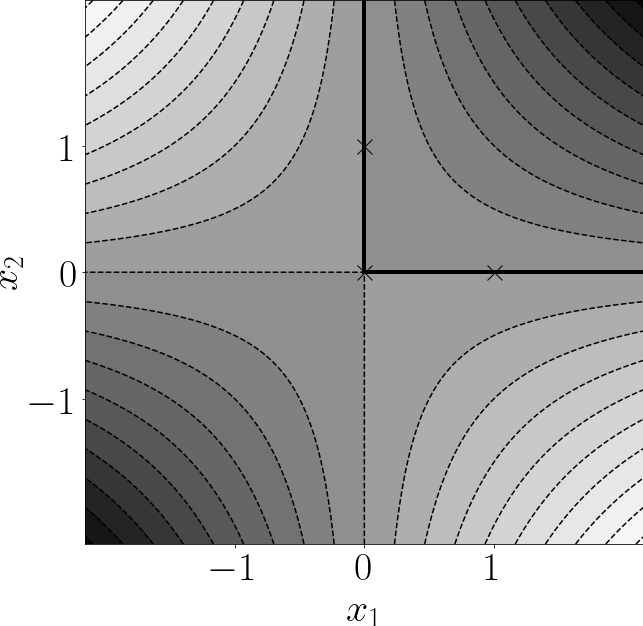}
    \end{subfigure}
    \caption{This illustration of the toy problem~\eqref{min:warm:up} shows the feasible set $\Omega$ (solid) together with the two strongly stationary points located at $(1, 0)$ and $(0,1)$, and the spurious solution in the origin. The level lines (dashed) represent the objective function on the left, and the penalty function $x_1 \cdot x_2$ (see~\eqref{min:LCQP:pen}) on the right.}
    \label{fig:warm_up}
\end{figure}

It is well known that problem~\eqref{min:LCQP} violates standard constraint qualifications such as the \ac{licq}, or even the weaker \ac{mfcq} at every feasible point~\cite{Ye1997}. Constraint regularity conditions are crucial assumptions for the concept of stationarity, e.g., for verifying \ac{kkt} points. This problem implies that standard approaches to solving~\acp{nlp} typically fail, creating a need for specialized methods.  Therefore, the theory of stationarity has been adapted and statements tailored to \acp{mpcc} have been developed~\cite{scheel2000mathematical}.

For recent advances in the field of \ac{mpcc}, including extensive lists of applications and methods, we refer to the surveys~\cite{ferris1997engineering,colson2007overview,Kim2020MPEC}. Within the subclass of convex quadratic objectives and linear constraints, there only exists a small amount of research ~\cite{Hall2021, bai2013convex,ralph2011c, chen2009class,deng2018globally}. These problems arise in a wide range of applications in engineering and economics, e.g., in optimal control problems of dynamical systems following discontinuous, but piecewise linear, dynamics~\cite{stewart1996numerical, stewart2010optimal}. Generally, systems of such dynamics are known as hybrid systems, and they have equivalently been modeled via mixed-logical dynamics~\cite{bemporad1999control, heemels2001equivalence}. Due to the combinatorial structure of the problem class it comes natural to investigate branch-and-bound methods in order to find global solutions~\cite{bai2013convex}. Recent advances proposed low-complexity methods for such systems~\cite{frick2019low}, motivated by the fact that mixed-integer solvers require high computational power and memory availability. Similarly, the intention of the solver presented here is to rapidly generate good local solutions with complementarity satisfaction up to machine precision.

The remainder of this paper is structured as follows. \secref{sec:background} provides background, existing methods and solvers for \acp{lcqp}. The algorithm is described in detail in \secref{sec:algorithm}, which builds upon the design originally presented in~\cite{Hall2021}. This includes an outer penalty loop, an inner \ac{scp} loop, an analytical globalization scheme, adaptive penalty updates and a heuristic for escaping saddle points. In \secref{sec:convergence} we state local convergence properties of the inner and outer loops, and provide statements for merit function descent at each inner loop iterate. The performance of the solver is benchmarked in \secref{sec:testing} against a variety of solvers and methods.

The contribution of this work primarily consists of the open-source software implementation written in \cpp. The code previously introduced provided the proof of concept for the underlying method, and is now transferred into a reliable, robust and efficient solver with extended flexibility and user options. This is supported by thorough benchmarks, which consist of the \ac{mpcc} benchmark~\cite{leyffer2000MacMPEC}, the benchmark discussed in~\cite{Hall2021}, and one benchmark created specifically for this paper. Additionally, the theoretical results are extended by a local convergence property, which states instant convergence on identification of a locally optimal active set.

\section{Background}\label{sec:background}

In this section we give a brief discussion of the background material, such as stationarity concepts and existing methods. We specifically lay our focus on the methods used for comparison within the numerical benchmarks in~\secref{sec:testing}. 

Let us begin by introducing a more generic form of the problem definition~\eqref{min:LCQP:intro}. Most importantly, this form enables users to pass arbitrary bounds on the complementarity variables. Whereas their upper bounds behave like simple linear constraints, their lower bounds have a special meaning: for each satisfied complementarity pair one of the lower bounds must be active. Hence, it is crucial to require their lower bounds to admit finite values. This form matches the solver's API and reads as 
\begin{mini!}
	{x \in \R^n}{\frac{1}{2}x^\top Q x + g^\top x \label{min:LCQP:obj}}
	{\label{min:LCQP}}{}
	\addConstraint{(Lx - \ell_L)^\top (Rx - \ell_R)}{ = 0, \label{min:LCQP:orthogonality:constraint}}
	\addConstraint{\ell_L \leq Lx}{\leq u_L, \label{min:LCQP:L}}
	\addConstraint{\ell_R \leq Rx}{\leq u_R, \label{min:LCQP:R}}
	\addConstraint{\ell_A \leq Ax}{\leq u_A, \label{min:LCQP:A}}
	\addConstraint{\ell_x \leq \phantom{A}x }{\leq u_x, \label{min:LCQP:x}}
\end{mini!}
where $0 \prec Q = Q^\top \in \R^{n \times n}$ , $g \in \R^n$, $L,R \in \R^{n_c \times n}$, $\ell_L, \ell_R, u_L, u_R \in \R^{n_c}$, $A \in \R^{n_A \times n}$, $\ell_A, u_A \in \R^{n_A}$, and $\ell_x, u_x \in \R^{n}$. We denote by $\Omega \subset \R^n$ the feasible set of~\eqref{min:LCQP}. 

Many \ac{qp} solvers exploit the special structure of the box constraints~\eqref{min:LCQP:x}, however, in view of the theoretical analysis and the higher level algorithm, these constraints can be seen as linear constraints. We thus assume throughout this paper that the box constraints~\eqref{min:LCQP:x} are passed via the linear constraints~\eqref{min:LCQP:A}. 

Further, let $\ell_{\tA}, \tA, u_{\tA}$ refer to the stacked combination of all linear constraints and their bounds~\eqref{min:LCQP:L}-\eqref{min:LCQP:x}. We refer to the resulting feasible set as the relaxed feasible set $\tomega \supseteq \Omega$ of~\eqref{min:LCQP}. Throughout this paper we assume that this relaxed feasible set satisfies \ac{licq} in every feasible point.

\subsection{Stationarity of LCQPs}
As mentioned, the considered problem class violates standard constraint qualifications required in order for the \ac{kkt} conditions to necessarily hold in solutions. We therefore review the adapted stationarity concept for complementarity constrained programs~\cite{scheel2000mathematical, guo2015solving, ralph2011c}. Let us first define the (in)active sets
\begin{subequations}
    \begin{align}
        \mathcal{A}^\mathrm{l}(x) &= \{ i \in \mathcal{J}_A \mid \ell_{A_i} = A_i x < u_{A_i} \}, \\
        \mathcal{A}^\mathrm{u}(x) &=   \{ i \in \mathcal{J}_A \mid \ell_{A_i} < A_i x = u_{A_i} \}, \\
        \mathcal{A}^\mathrm{e}(x) &= \{ i \in \mathcal{J}_A \mid \ell_{A_i} = A_i x = u_{A_i} \}, \\
        \mathcal{A}^\mathrm{f} (x) &=   \{ i \in \mathcal{J}_A \mid \ell_{A_i} < A_i x < u_{A_i} \},
    \end{align}
\end{subequations}
where $\mathcal{J}_A = \{1, \dots, n_A\}$. Analogously define the respective sets for the constraints $\ell_L \leq Lx \leq u_L$ and $\ell_R \leq Rx \leq u_R$ by $\mathcal{L}^\mathrm{l}, \mathcal{R}^\mathrm{l}$, etc. Further, let $\mathcal{W}^\mathrm{l}(x) = \mathcal{L}^\mathrm{l}(x) \cap \mathcal{R}^\mathrm{l}(x)$, $\bar{\mathcal{L}}^\mathrm{l} = \mathcal{L}^\mathrm{l} \setminus \mathcal{W}^\mathrm{l}$, and $\bar{\mathcal{R}}^\mathrm{l} = \mathcal{R}^\mathrm{l} \setminus \mathcal{W}^\mathrm{l}$. Note that any feasible point $x \in \Omega$ must satisfy $\mathcal{L}^\mathrm{l}(x) \cup \mathcal{R}^\mathrm{l}(x) = \{1,\dots,n_c\}$ due to the complementarity constraints~\eqref{min:LCQP:orthogonality:constraint}-\eqref{min:LCQP:R}.

The adapted stationarity concepts are very similar to the standard \ac{kkt} conditions of~\eqref{min:LCQP}, with the sole difference that the signs of the dual variables associated with constraints $\bar{\mathcal{L}}^\mathrm{l}(x)$ and $\bar{\mathcal{R}}^\mathrm{l}(x)$ are not required to be nonnegative. 

\begin{definition}
    A feasible point $x \in \Omega$ of \ac{lcqp}~\eqref{min:LCQP} is called strongly stationary, if there exist dual variables $y = (y_A, y_L, y_R) \in \R^{n_A} \times \R^{n_c} \times \R^{n_c}$ satisfying
    \begin{align}\label{eq:strong:stationarity}
        \begin{array}{rlrlrl}
        \multicolumn{5}{r}{Qx + g - A^\top y_A - L^\top y_L - R^\top y_R} & = 0, \\
        y_{A_i} & = 0,~i \in \mathcal{A}^\mathrm{f}(x), & \quad y_{A_i} & \geq 0,~i \in \mathcal{A}^\mathrm{l}(x), & \quad y_{A_i} & \leq 0,~i \in \mathcal{A}^\mathrm{u}(x), \\
        y_{L_i} & = 0,~i \in \mathcal{L}^\mathrm{f}(x), & \quad y_{L_i} & \geq 0,~i \in \mathcal{W}^\mathrm{l}(x), & \quad y_{L_i} & \leq 0,~i \in \mathcal{L}^\mathrm{u}(x), \\
        y_{R_i} & = 0,~i \in \mathcal{R}^\mathrm{f}(x), & \quad y_{R_i} & \geq 0,~i \in \mathcal{W}^\mathrm{l}(x), & \quad y_{R_i} & \leq 0,~i \in \mathcal{R}^\mathrm{u}(x).
        \end{array}
    \end{align}
\end{definition}
Other stationarity concepts further relax the sign requirement for the dual variables associated with $\mathcal{W}^\mathrm{l}(x)$~\cite{guo2015solving, ralph2011c}.

\subsection{Existing Methods}\label{sec:methods}
In this section we specify the methods utilized for comparison in \secref{sec:testing}. There are several techniques and solvers designed for \acp{mpcc}~\cite{murtagh1983minos,gill2005snopt,ralph2004some}. Additionally, we will consider a \ac{miqp} reformulation, such that any \ac{miqp} solver can be used. In this section we describe in detail three commonly used \ac{nlp} reformulations, each of which eliminates the orthogonality constraint~\eqref{min:LCQP:orthogonality:constraint} by introducing either regularized constraints or penalization~\cite{ralph2004some}.

\subsubsection{Penalty Reformulation}\label{sec:penalty:reformulations}
The first method replaces~\eqref{min:LCQP:orthogonality:constraint} with a penalty in the objective and reads as
\begin{mini!}
	{x \in \R^n}{\frac{1}{2}x^\top Q x + g^\top x + \rho \cdot (Lx - \ell_L)^\top (R x - \ell_R)\label{min:LCQP:pen:obj}}
	{\label{min:LCQP:pen}}{}
	\addConstraint{\ell_{\tA} \leq \tA x}{\leq  u_{\tA},}
\end{mini!}
where $\rho > 0$ is the respective penalty parameter. Note that the penalty term is always nonnegative due to~\eqref{min:LCQP:L} and~\eqref{min:LCQP:R}. The right plot in \figref{fig:warm_up} depicts the level lines of this penalty function for the toy problem~\eqref{min:warm:up}. The corresponding LCQP~\eqref{min:LCQP} is then approximated by solving~\eqref{min:LCQP:pen} either a single time with a large penalty value, or sequentially with an exponentially increasing penalty value~\cite[Section 4]{ferris1999Solution}. Ralph and Wright proved that this is an exact penalty reformulation for large enough, but finite, values of $\rho$~\cite[Section 5]{ralph2004some}:
\begin{theorem}\label{theo:penalty:convergence}
    Let~\eqref{min:LCQP:pen} satisfy \ac{licq} at $x^\ast \in \R^n$. Then the following statements hold:
        \begin{enumerate}
            \item\label{theo:penalty:convergence:1} If $(x^\ast, y_A^\ast, y_L^\ast, y_R^\ast)$ is a strongly stationary point of the \ac{lcqp}~\eqref{min:LCQP}, then there exist dual variables $(\bar{y}_A, \bar{y}_L, \bar{y}_R)$ such that $x^\ast$ is a \ac{kkt} point of~\eqref{min:LCQP:pen} for any $\rho$ satisfying
            \begin{equation}\label{eq:penalty:bound}
                \rho \geq 1 + \max \left\{ 0, \max_{i \in \bar{\mathcal{L}}^\mathrm{l}(x^\ast)}\left\{ \frac{-y_{L_i}^\ast}{R_i x^\ast - \ellRi} \right\}, \max_{i \in \bar{\mathcal{R}}^\mathrm{l}(x^\ast)}\left\{ \frac{-y_{R_i}^\ast}{L_i x^\ast - \ellLi} \right\} \right\}.
            \end{equation}
            The dual variables are given by
            \begin{subequations}\label{eq:dual:switch}
                \begin{align}
                    \bar{y}_A &= y_A^\ast, \\
                    \bar{y}_{L_i} &= y_{L_i}^\ast,                                &&\mathrm{for~} i \notin \mathcal{L}^\mathrm{l}(x^\ast),\\
                    \bar{y}_{R_i} &= y_{R_i}^\ast,                                &&\mathrm{for~} i \notin \mathcal{R}^\mathrm{l}(x^\ast),\\
                    \bar{y}_{L_i} &= y_{L_i}^\ast + \rho (R_i x^\ast - \ellRi), &&\mathrm{for~} i \in \mathcal{L}^\mathrm{l}(x^\ast),\\
                    \bar{y}_{R_i} &= y_{R_i}^\ast + \rho (L_i x^\ast - \ellLi), &&\mathrm{for~} i \in \mathcal{R}^\mathrm{l}(x^\ast),
                \end{align}
            \end{subequations}

            \item\label{theo:penalty:convergence:2} If $(x^\ast, \bar{y}_A, \bar{y}_L, \bar{y}_R)$ is a \ac{kkt} point of~\eqref{min:LCQP:pen} and $(Lx^\ast - \ell_L)^\top (R x^\ast - \ell_R) = 0$, then $(x^\ast, y_A^\ast, y_L^\ast, y_R^\ast)$ is a strongly stationary point of the \ac{lcqp}~\eqref{min:LCQP}, where the dual variables are obtained via~\eqref{eq:dual:switch}.
        \end{enumerate}
\end{theorem}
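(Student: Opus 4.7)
The plan is to directly compare the KKT stationarity condition of the penalty reformulation~\eqref{min:LCQP:pen} with the strong stationarity condition~\eqref{eq:strong:stationarity}. Differentiating the penalty objective yields the stationarity equation of~\eqref{min:LCQP:pen},
\begin{equation*}
Qx + g + \rho L^\top (Rx - \ell_R) + \rho R^\top (Lx - \ell_L) - A^\top \bar{y}_A - L^\top \bar{y}_L - R^\top \bar{y}_R = 0,
\end{equation*}
which rearranges as
\begin{equation*}
Qx + g - A^\top \bar{y}_A - L^\top \bigl(\bar{y}_L - \rho (Rx - \ell_R)\bigr) - R^\top \bigl(\bar{y}_R - \rho (Lx - \ell_L)\bigr) = 0.
\end{equation*}
This exhibits the two stationarity equations as a single equation under the change of variables~\eqref{eq:dual:switch}, and this identity drives both implications.

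For part~(i), I would start from a strongly stationary triple and construct $(\bar{y}_A, \bar{y}_L, \bar{y}_R)$ via~\eqref{eq:dual:switch}; the identity above then reduces the stationarity equation of~\eqref{min:LCQP:pen} to~\eqref{eq:strong:stationarity}. The remaining task is to verify the standard KKT sign conditions on $\bar{y}$. The only nontrivial checks concern indices where the shift $\rho(R_i x^\ast - \ell_{R_i})$ or $\rho(L_i x^\ast - \ell_{L_i})$ could threaten $\bar{y}_{L_i} \geq 0$ or $\bar{y}_{R_i} \geq 0$, as required for the active lower-bound constraints. Here the decomposition $\mathcal{L}^\mathrm{l} = \mathcal{W}^\mathrm{l} \cup \bar{\mathcal{L}}^\mathrm{l}$ is essential: on $\mathcal{W}^\mathrm{l}$ the shift vanishes since $R_i x^\ast = \ell_{R_i}$, so $\bar{y}_{L_i} = y_{L_i}^\ast \geq 0$ directly from strong stationarity, while on $\bar{\mathcal{L}}^\mathrm{l}$ the denominator $R_i x^\ast - \ell_{R_i}$ is strictly positive and the penalty bound~\eqref{eq:penalty:bound} is exactly the threshold ensuring $\bar{y}_{L_i} \geq 0$. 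A symmetric argument handles $\bar{y}_{R_i}$. Indices outside $\mathcal{L}^\mathrm{l}$ (respectively $\mathcal{R}^\mathrm{l}$) inherit the sign of $y_{L_i}^\ast$ (respectively $y_{R_i}^\ast$) unchanged, which matches the KKT sign requirements for free or upper-active constraints.

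For part~(ii), feasibility of $x^\ast$ together with the orthogonality equality $(Lx^\ast - \ell_L)^\top (Rx^\ast - \ell_R) = 0$ forces $\mathcal{L}^\mathrm{l}(x^\ast) \cup \mathcal{R}^\mathrm{l}(x^\ast) = \{1, \dots, n_c\}$. I would define the LCQP multipliers $(y_A^\ast, y_L^\ast, y_R^\ast)$ by inverting~\eqref{eq:dual:switch}, apply the identity backwards to recover the strong stationarity equation, and verify the required signs. On $\mathcal{W}^\mathrm{l}$ the shift vanishes so $y_{L_i}^\ast = \bar{y}_{L_i} \geq 0$ is immediate. On $\bar{\mathcal{L}}^\mathrm{l}$ and $\bar{\mathcal{R}}^\mathrm{l}$ the strong stationarity definition imposes no sign restriction on the corresponding multipliers, so subtracting a nonnegative shift is harmless. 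On the remaining strata $\mathcal{L}^\mathrm{f}, \mathcal{L}^\mathrm{u}, \mathcal{R}^\mathrm{f}, \mathcal{R}^\mathrm{u}$ the shift is zero because no lower bound is active, and the KKT signs of $\bar{y}$ translate directly into those required by~\eqref{eq:strong:stationarity}.

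The main obstacle is the bookkeeping across the various active-set strata, and in particular tracking the interaction between $\mathcal{W}^\mathrm{l}$, $\bar{\mathcal{L}}^\mathrm{l}$, and $\bar{\mathcal{R}}^\mathrm{l}$ with the correct sign convention on each. The LICQ assumption enters the argument only to ensure uniqueness of the KKT multipliers of~\eqref{min:LCQP:pen} so that the constructed $\bar{y}$ is indeed the unique multiplier; everything else is purely algebraic manipulation of the stationarity identity above.
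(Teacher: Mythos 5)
The paper does not actually prove this theorem: it is imported from Ralph and Wright and supported only by a citation, so there is no in-paper proof to compare against. Your direct verification is correct and is the standard argument; the rearranged stationarity identity combined with the dual switch~\eqref{eq:dual:switch} is precisely the computation the paper itself reuses later in the proof of \theoref{theo:single:solve}, and your case analysis over $\mathcal{W}^\mathrm{l}$, $\bar{\mathcal{L}}^\mathrm{l}$, $\bar{\mathcal{R}}^\mathrm{l}$ and the free/upper strata correctly identifies \eqref{eq:penalty:bound} as exactly the threshold that makes the shifted multipliers nonnegative on $\bar{\mathcal{L}}^\mathrm{l}$ and $\bar{\mathcal{R}}^\mathrm{l}$. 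Two minor points are worth making explicit. First, in part (i) the componentwise identity $\bar{y}_{L_i} - \rho(R_i x^\ast - \ellRi) = y_{L_i}^\ast$ for $i \notin \mathcal{L}^\mathrm{l}(x^\ast)$ relies on complementarity forcing $R_i x^\ast = \ellRi$ whenever $L_i x^\ast > \ell_{L_i}$; you state this only in part (ii), but it is needed in both directions for the stationarity equations to coincide term by term. Second, the degenerate stratum where $\ell_{L_i} = L_i x^\ast = u_{L_i}$ is silently excluded from your bookkeeping, but the paper's own definition of strong stationarity makes the same omission, so this is not a defect relative to the source. Your remark that LICQ serves only to pin down the multipliers is accurate here: since both directions exhibit multipliers explicitly, the assumption plays no essential role in the algebra.
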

The algorithm implemented in the presented software package is based on this penalty reformulation, and we thus focus on this specific penalty function, though other choices are conceivable as well~\cite{fischer1995ncp,abdallah2019solving,chen2000penalized}.

\subsubsection{Constraint Regularization Reformulations}
The remaining two \ac{nlp} methods both replace~\eqref{min:LCQP:orthogonality:constraint} by constraint regularization strategies, each of which uses a parameter $\sigma > 0$. These methods approximate~\eqref{min:LCQP} by
\begin{mini!}
	{x \in \R^n}{\frac{1}{2}x^\top Q x + g^\top x}
	{\label{min:LCQP:nlp:smooth}}{}
	\addConstraint{(Lx - \ell_{L})^\top(R x - \ell_{R})}{= \sigma,}
	\addConstraint{\ell_{\tA} \leq \tA x}{\leq  u_{\tA},}
\end{mini!}
which we call the smoothed reformulation of~\eqref{min:LCQP}, and by
\begin{mini!}
	{x \in \R^n}{\frac{1}{2}x^\top Q x + g^\top x}
	{\label{min:LCQP:nlp:reg}}{}
	\addConstraint{(Lx - \ell_L)^\top (R x - \ell_R)}{\leq \sigma,}
	\addConstraint{\ell_{\tA} \leq \tA x}{\leq  u_{\tA},}
\end{mini!}
which we call the relaxation of~\eqref{min:LCQP}. In contrast to the penalty parameter $\rho$, these methods approximate~\eqref{min:LCQP} for $\sigma \to 0$. As for the penalty reformulation, they naturally lend themselves to being used in a sequential scheme with exponentially decaying choices for $\sigma$. 

\subsubsection{\ac{miqp} Reformulation}
Finally, we address a direct reformulation into a mixed-integer quadratic program. This is straightforward under the existence of finite upper bounds on the complementarity variables, i.e., given $u_L, u_R < \infty$. In that case we may simply introduce two binary variables $(z_i^L, z_i^R)$ for each complementarity constraint $i \in \{1,2,\dots,n_c\}$. We can then enforce complementarity via the set of constraints 
\begin{subequations}
    \begin{align}
        \ell_{L_i} &\leq L_i x, \\
        \ell_{R_i} &\leq R_i x, \\
        L_i x &\leq u_{L_i} z_i^L + (1-z_i^L)\ell_{L_i}, \\
        R_i x &\leq u_{R_i} z_i^R + (1-z_i^R)\ell_{R_i}, \\
        1 &\geq z_i^L + z_i^R.
    \end{align}
\end{subequations}
Thus, for $z_i^L = 1$ we simply regain the upper bound. On the other hand, $z_i^L = 0$ fixes the complementarity variable to its lower bound. In total the reformulation reads as
\begin{mini!}
	{\substack{\textstyle{x \in \R^n}, \\ \textstyle{z^L,z^R \in B^{n_c}}}}{\frac{1}{2}x^\top Q x + g^\top x}
	{\label{eq:MIQP}}{}
	\addConstraint{\ell_{\tA}}{\leq \tA x \leq  u_{\tA},}
    \addConstraint{L_i x}{\leq u_{L_i} z_i^L + (1-z_i^L)\ell_{L_i}, }{\quad 1 \leq i \leq n_c,}
    \addConstraint{R_i x}{\leq u_{R_i} z_i^R + (1-z_i^R)\ell_{R_i}, }{\quad 1 \leq i \leq n_c,}
    \addConstraint{1}{\geq z^L + z^R,}
\end{mini!}
where $B = \{0,1\}$. If upper bounds on the complementarity variables are not defined, then we rely on a big-$M$ reformulation, by setting the upper bounds to a large $M \gg 0$.

\section{Algorithm}\label{sec:algorithm}
In this section we describe in detail the initial algorithmic development~\cite{Hall2021} and provide several extensions. We begin by considering the penalty reformulation~\eqref{min:LCQP:pen} together with an exponential penalty update rule similar to the one described by Ferris et al. in~\cite{ferris1999Solution}. This technique leads to a sequence of nonconvex quadratic programming problems (which we denote by the outer loop below). Solutions to each of these problems are then found via a \ac{scp} method (which provides the inner loop). The key steps of the algorithm are captured in the pseudocode~\algoref{algo:LCQP} at the end of this section. In this section we introduce many parameters, the matching API names of which we mention as (\useroption{parameterName}) whenever newly introduced. \tabref{tab:user:options} summarizes their default values and feasible range. Finally, we remind the reader that, in contrast to~\cite{Hall2021}, the complementarity constraints are generalized by allowing generic lower and upper bounds, which creates some subtle differences.

\subsection{Penalty Homotopy}\label{sec:penalty:homotopy}
Motivated by \theoref{theo:penalty:convergence}, we desire a solution of the penalty reformulation~\eqref{min:LCQP:pen} for a penalty large enough to satisfy the complementarity constraints. However, the required penalty value is a priori unknown. One could consider simply solving~\eqref{min:LCQP:pen} for a very large penalty in the hope of instantly satisfying complementarity. However, these penalty reformulations often become ill-conditioned for large penalty parameters, i.e., the largest absolute eigenvalue of $Q + \rho C$ significantly dominates the smallest absolute eigenvalue. Furthermore, a homotopy often avoids convergence to strongly suboptimal solutions, as for example shown for \acp{ocp} with discontinuous dynamics~\cite{nurkanovic2020limits}. On the other hand, solving the penalized subproblem for a very small penalty parameter leads to a solution close to the global minimum $\tilde{x}^\ast$ of the objective function \eqref{min:LCQP:obj} with respect to the relaxed feasible set $\tomega$. By gradually increasing the penalty parameter we hope to find a solution path from the relaxed minimizer $\tilde{x}^\ast$ to a strongly stationary point $x^\ast \in \Omega$ approximated from within the relaxed feasible set $\tilde{\Omega}$. Yet, this is only a heuristic and there is no guarantee of finding the global minimizer, or even any minimizer, as the original NLP~\eqref{min:LCQP} is nonconvex.

Let us now describe the homotopy. For a given penalty parameter $\rho_k > 0$ the respective penalty reformulation~\eqref{min:LCQP:pen} is solved as described in the next section. Subsequently, the penalty parameter is updated as $\rho_{k+1} = \beta \rho_k$ with a fixed factor $\beta > 1$. This method also requires the choice of an initial penalty parameter $\rho_0 > 0$ ({\initialPenaltyParameter}), which is typically chosen rather small. The factor $\beta$ (\penaltyUpdateFactor) represents the base of the exponential growth, and one could alternatively write $\rho_k = \beta^k \rho_0$. This procedure is repeated until complementarity is satisfied, or the penalty parameter exceeds its limit ({\maxRho}), in which case the convergence is assumed to have failed. 

Before proceeding with the inner loop, let us refine the penalty formulation~\eqref{min:LCQP:pen}. We introduce the penalty function
\begin{equation}\label{eq:phi}
    \varphi(x) = (Lx - \ell_L)^\top (Rx - \ell_R) = \frac{1}{2} x^\top C x + g_\varphi^\top x + \ell_L^\top \ell_R,
\end{equation}
where $\frac{1}{2}C = \frac{1}{2}\left(L^\top R + R^\top L\right)$ is the symmetrization of the product $L^\top R$, and $g_\varphi = -(R^\top \ell_L + L^\top \ell_R)$ is the linear component of $\varphi$. We remark here that $C$ is typically indefinite. If it does not contain negative eigenvalues, then the penalty reformulation is convex and its unique solution satisfies complementarity. We then combine the linear components of the objective function~\eqref{min:LCQP:pen:obj} by defining $g_k = g + \rho_kg_\varphi$. Finally, we obtain the following optimization problem, that is equivalent to~\eqref{min:LCQP:pen}
\begin{mini!}
	{x \in \R^n}{\frac{1}{2}x^\top Q x + g_k^\top x + \frac{\rho_k}{2}~x^\top C x \label{min:LCQP:pen:outer:obj}}
	{\label{min:LCQP:pen:outer}}{}
	\addConstraint{\ell_{\tA} \leq \tA x}{\leq  u_{\tA},}
\end{mini!}
and call the sequence of solving these problems for increasing $\rho_k$ the outer loop.

\subsection{Sequential Convex Programming}
Each outer loop problem is solved using \ac{scp}~\cite{messerer2021survey}, resulting in an inner loop. Let $k$ and $j$ denote the outer and inner loop indices, respectively. We denote by $\xkj$ the most recent inner \ac{scp} iterate. The sequence is initialized with an initial guess $x_{00}$, or alternatively the global minimizer of the relaxed problem (see~\secref{sec:initialization}).

The penalty function, which is the only nonconvex component of~\eqref{min:LCQP:pen:outer}, is approximated at $\xkj$ using its first-order Taylor expansion
\begin{equation*}
    \begin{split}
        \varphi(x) &\approx \varphi(\xkj) + (x - \xkj)^\top \nabla \varphi(\xkj) \\
        &= \left( \varphi(\xkj) - \xkj^\top (C \xkj + g_\varphi) \right) + x^\top (C \xkj + g_\varphi).
    \end{split}
\end{equation*}
Note that $x^\top (C \xkj + g_\varphi)$ is the only term dependent on $x$. Since the constant terms do not affect the optimizer, we omit them from here on. Replacing the penalty function by this term yields the convex inner loop subproblem
\begin{mini!}
	{x \in \R^n}{\frac{1}{2}x^\top Q x + \left(g_k + \rho_k C \xkj\right)^\top x  \label{min:LCQP:pen:inner:obj}}
	{\label{min:LCQP:pen:inner}}{}
	\addConstraint{\ell_{\tA} \leq \tA x}{\leq  u_{\tA}.}
\end{mini!}
We denote the unique minimizer of the inner loop subproblem by $\xkj^\ast$ and the corresponding step direction by $\pkj = \xkj^\ast - \xkj$. Given this inner solution an optimal step length $\alphakj$ is obtained from a globalization scheme described in \secref{subsec:globalization}. Finally, the step update $x_{k,j+1} = \xkj + \alphakj \pkj$ is performed. The inner loop is terminated once a KKT point of the respective outer loop problem \eqref{min:LCQP:pen:outer} is found.

There are two reasons why it can be attractive to replace the full penalty function by its linear approximation. First, convex subproblems are obtained at the cost of the additional inner loop. Noting that the eigenvalues of $C$ will dominate over those of $Q$ for large penalty parameters, we find that the penalty formulation~\eqref{min:LCQP:pen:outer} becomes more and more indefinite as the penalty parameter grows. On the other hand, convexity of the inner loop subproblem~\eqref{min:LCQP:pen:inner} is always ensured, as the Hessian matrix is given by $Q$ for every subproblem. This also induces the second advantage: the Hessian and constraint matrices remain constant over both the inner and outer loop iterates. Consequently, the KKT matrix factorization can be reused, and each subproblem can be solved efficiently, e.g., by making use of the warm-starting techniques employed in QP solvers such as qpOASES~\cite{Ferreau2008} or OSQP~\cite{stellato2018embedded}. With the computation of factorizations being a significant expense, this advantage can outweigh the cost of inner loop iterations, as demonstrated in \secref{sec:testing}. We will also see that the \ac{scp} loop terminates finitely near an exact solution if $\rho_k$ is large enough (see~\theoref{theo:single:solve}).

\subsection{Optimal Step Length Globalization} \label{subsec:globalization}
Consider the merit function
\begin{equation}
    \psi_k(x) = \frac{1}{2} x^\top (Q + \rho_k C) x + g_k^\top x,
\end{equation}
which coincides with the outer loop objective function~\eqref{min:LCQP:pen:outer:obj}. On the other hand, the inner loop objective function~\eqref{min:LCQP:pen:inner:obj} provides the strictly convex quadratic model
\begin{equation}\label{eq:quadratic:model}
    \thetakj(x) = \frac{1}{2} x^\top Q x + (g_k + \rho_k C \xkj)^\top x.
\end{equation}
As discussed in~\cite{Hall2021}, the step length formula is obtained by minimizing the merit function along the step $\pkj$, i.e., by solving
\begin{equation}\label{min:merit:function}
    \minimize_{\alphakj \in [0,1]}\quad \psi_k(\xkj + \alphakj \pkj).
\end{equation}
This yields a scalar QP and its analytical solution is given by
\begin{equation}\label{eq:step:length:formula}
    \alphakj^\ast =
    \begin{cases}
        \dfrac{-\nabla \psi_k(\xkj)^\top \pkj}{\pkj^\top Q  \pkj + \pkj^\top  \rho_k C \pkj}, & \mathrm{if}~\pkj^\top C \pkj > 0, \\
        1, & \mathrm{else}.
    \end{cases}
\end{equation}

\begin{remark}
    The step length $\alphakj^\ast$ is strictly positive if $\xkj$ is not already a KKT point of the outer loop problem. We will discuss this in more detail in \secref{sec:convergence} by proving strict merit function descent in direction $\pkj$, i.e., $\nabla \psi_k(\xkj)^\top \pkj < 0$.
\end{remark}

\begin{remark}
    The formula presented in~\cite{Hall2021} was derived for a slightly less generic form, as generic bounds on the complementarity pairs were not permitted. However, this only changes the linear component $g_k$, which remains constant for each inner loop.
\end{remark}

\subsection{Dynamic Penalty Updates}\label{sec:dynamic:penalty}
It is possible that the inner loop requires many iterates until a satisfactory level for convergence is reached, while the progress of merit function descent might stagnate. In the context of interior point methods for \acp{mpcc}, Leyffer et al. have shown that it can be advantageous to terminate the inner loop prematurely and update the penalty parameter dynamically~\cite[Section 5]{leyffer2006interior}. This dynamic update is triggered whenever an inner loop iterate satisfies
\begin{equation} \label{eq:dynamic:penalty:check}
    \varphi(\xkj) > \varepsilon_\varphi \quad \mathrm{and} \quad \varphi(\xkj) > \eta \max\{ \varphi(x_{k,j-1}), \dots, \varphi(x_{k,j-n}) \},
\end{equation}
where $\varepsilon_\varphi > 0$ describes the numerical tolerance for the complementarity violation ({\complementarityTolerance}).
This method assures that, until complementarity is satisfied, each iterate reduces one of the previous $n$ complementarity violations by at least a factor of $\eta$. We embedded this strategy into our solver with the options {\nDynamicPenalty} and {\etaDynamicPenalty}. This strategy can be switched off by setting ${\nDynamicPenalty} = 0$.

\subsection{Initialization Strategy}\label{sec:initialization}
The initial guess is often a crucial factor for finding good local solutions of nonlinear programs. Thus it is desirable to initialize solvers in the basis of attraction of a good -- ideally global -- solution. The presented solver contains the option {\solveZeroPenaltyFirst}, a flag indicating whether the sequence should be initialized by solving \eqref{min:LCQP:pen:outer} with $\rho = 0$ (recall that this problem is convex, and its solution is the global minimizer of the objective function over the relaxed set $\tomega$). This canonical choice makes passing an initial guess optional.

However, this method becomes disadvantageous if proximity to a good solution is known. In this case the flag should be disabled and the solver should be initialized with a large penalty parameter in order to prevent the solver from leaving the area of attraction of the local solution. This is especially the case if the active set of the global solution has been identified (see \theoref{theo:single:solve}).

\begin{figure}
    \begin{subfigure}[b]{.5\linewidth}
        \includegraphics[width=0.95\linewidth,left]{./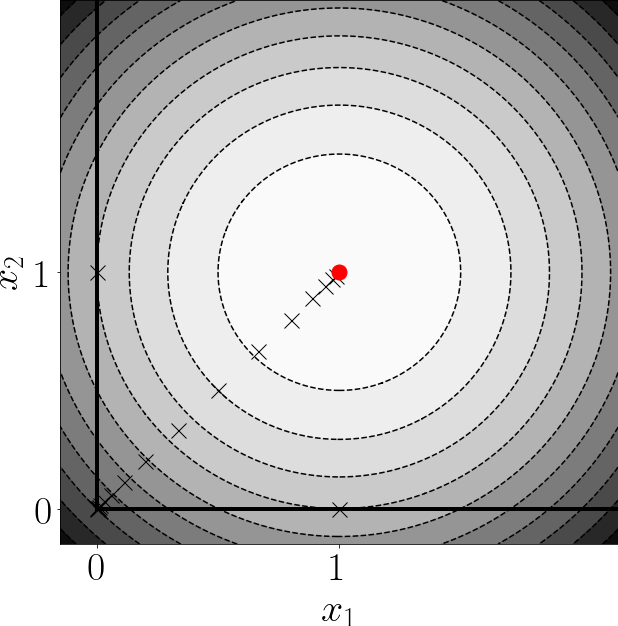}
        \caption{$\rho = 0$}
        \label{fig:warm_up:rho:0}
    \end{subfigure}%
    \begin{subfigure}[b]{.5\linewidth}
        \includegraphics[width=0.95\linewidth,right]{./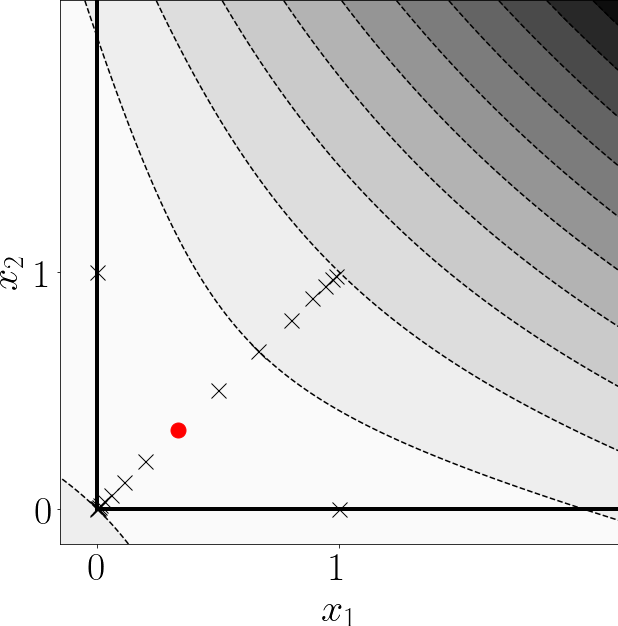}
        \caption{$\rho = 4$}
        \label{fig:warm_up:rho:4}
    \end{subfigure}
    \caption{The penalty reformulation of the LCQP~\eqref{min:warm:up} has a stable trajectory of saddle points (minimizers for $\rho < 2$) along the points $\bigl(2/(2 + \rho), 2/(2 + \rho)\bigr)$ marked with the black crosses. The red dot indicates the saddle point of the penalized objective of which the level lines are indicated.}
    \label{fig:saddle:points}
\end{figure}

\subsection{Gradient Perturbation}
Some problems might have a stable trajectory of minimizers or saddle points towards undesirable solutions. We demonstrate this issue using the toy problem~\eqref{min:warm:up}. In this case the standard strategy would initialize at $(1,1)$ and follow the saddle point trajectory into the origin and terminate at this locally maximal solution (see \figref{fig:saddle:points}). With a small zero mean random perturbation of the gradient $g_k$ at each step, we move the iterates away from the saddle points until the error is large enough for the QP solver to detect descent towards one of the strongly stationary points $(1, 0)$ or $(0, 1)$. Alternatively, one could consider applying the perturbation to the step directly. However, this would require additional safety-checks in order to dodge infeasibility. Perturbing the gradient is safe in this aspect as it only alters the objective function.

\subsection{Termination Criterion}\label{sec:termination}
We terminate the algorithm under three different scenarios: either a solution is found, or the penalty parameter is too large, or the maximum number of iterations are exceeded. The termination criterion for a solution consists of \ac{kkt} point verification of an iterate $\xkj$ for the penalty formulation~\eqref{min:LCQP:pen} together with sufficient complementarity satisfaction. The tolerances of both conditions can be adapted via the options {\stationarityTolerance} and {\complementarityTolerance}, respectively. Note that any feasible iterate $\xkj$ of~\eqref{min:LCQP:pen:inner} satisfies all constraints of~\eqref{min:LCQP}, except for the orthogonality constraint~\eqref{min:LCQP:orthogonality:constraint}. Thus it is sufficient to check
\begin{equation*}
    \begin{split}
        \| (Q + \rho_k C)\xkj + g_k - A^\top y_A - L^\top y_L - R^\top y_R \|_\infty &\leq {\stationarityTolerance}, \\
        \varphi(\xkj) &\leq {\complementarityTolerance}.
    \end{split}
\end{equation*}
The remaining feasibility conditions are assumed to be transferred through the QP solver. Since the underlying QP solvers use inherently different termination criteria, it is difficult to provide bounds on how to choose the termination tolerances precisely. If the termination conditions for \solverref~are chosen too small, then precision errors from the utilized QP solver may interfere with convergence. In this case, the precision for \solverref~should be decreased, i.e., the tolerance increased (or vice versa the QP solver precision should be increased). We remark here that, on successful convergence, the dual variables obtained from the penalty reformulation are translated into dual variables of the original \ac{lcqp}~\eqref{min:LCQP:obj} using~\eqref{eq:dual:switch}.

\subsection{QP Solvers}\label{sec:qp:solvers}
Through the user option {\qpSolver} it is possible to switch between the three modes $0,1,$ and $2$. The mode $0$ refers to qpOASES~\cite{Ferreau2014} in dense mode, $1$ refers to qpOASES in sparse mode, and $2$ refers to OSQP~\cite{osqp} (in sparse mode).

The performance of mode $1$ will depend on how qpOASES is compiled. If Matlab is installed on the machine one can pass the CMake option \useroption{-DQPOASES\_SCHUR=ON} to compile qpOASES with the Schur Complement method~\cite[Chapter 8]{janka2015schur}, which uses the sparse linear solver MA57~\cite{ma57}.

\subsection{Print Level}\label{sec:print:level}
The solver prints some information about the iterates to the command line, the amount of which can be controlled via the user option {\printLevel}. If no iterate output is desired then $0$ can be passed. Mode $1$ will print only one iterate of each inner loop. Mode $2$ will print every iterate.

\begin{algorithm}
    \vspace{0.5em}
    \KwIn{$\rho>0,~\beta>1,~\varepsilon_\mathrm{tol}>0$}
    \KwOut{Stationary point $(x_k, y_k)$ of LCQP~\eqref{min:LCQP}}\vspace{0.5em}

    \textcolor{gray}{\# Create QP solver and factorize KKT matrix} \\
    $\texttt{qp}(Q, g, \tilde{A}, \ell_x, u_x, \ell_{\tilde{A}}, u_{\tilde{A}})$; \label{line:factorize}\\[0.5em]

    \textcolor{gray}{\# Initialize solver with zero penalty QP} \\
    \texttt{$(x_k, y_k) = \texttt{qp.solve}()$}; \label{line:init}\\[0.5em]

    \textcolor{gray}{\# Outer loop (penalty update loop)} \\
    \While{\textup{true}}{ \label{line:while:outer} \vspace{0.5em}

        \textcolor{gray}{\# Update outer loop linear term}\\
        \texttt{$g_k = g - \rho(R^\top \ell_L + L^\top \ell_R)$}; \\[0.5em]

        \textcolor{gray}{\# Inner loop (approximate penalty function)} \\
        \While{$\textnormal{\texttt{stationarity}}(x_k, y_k, \rho) > \varepsilon_\mathrm{tol}$}{ \label{line:while:inner} \vspace{0.5em}
            \textcolor{gray}{\# Update objective's linear component}\\
            \texttt{$\texttt{qp.update\_g}(g_k + \rho C x_k)$}; \label{line:update:g} \\[0.5em]

            \textcolor{gray}{\# Step computation and dual variable update (solve~\eqref{min:LCQP:pen:inner})}\\
            \texttt{$(x_n, y_k) = \texttt{qp.solve()}$}; \label{line:hotstart} \\[0.5em]

            \textcolor{gray}{\# Get optimal step length according to~\eqref{eq:step:length:formula}}\\
            \texttt{$\alpha = \texttt{get\_step\_length}(x_k, x_n, \rho)$};\label{line:optimalStepLength} \\[0.5em]

            \textcolor{gray}{\# Update primal variables}\\
            \texttt{$x_k = x_k + \alpha(x_n - x_k)$}; \\[0.5em]

            \textcolor{gray}{\# Perform dynamic penalty update}\\
            \If{ \textup{\texttt{Condition~\eqref{eq:dynamic:penalty:check} holds}}}{ \label{line:dynamic:update:check}
            \texttt{break}\;
            }
        }\vspace{0.5em}

        \textcolor{gray}{\# Terminate if stationarity and complementarity are satisfied}\\
        \If{ $\varphi(x_k) < \varepsilon_\mathrm{tol}$ }{ \label{line:comp:check}
        \texttt{return $(x_k, y_k)$}\;
        }\vspace{0.5em}

        \textcolor{gray}{\# Increase penalty parameter}\\
        \texttt{$\rho = \beta\cdot\rho$}; \\[0.5em]
    }
    \caption{Pseudocode of the solver's main loop.}
    \label{algo:LCQP}
\end{algorithm}

\subsection{Software}
The open-source software package written in \cpp~is available through the GitHub repository
\[ \href{https://github.com/hallfjonas/LCQPow}{\texttt{https://github.com/hallfjonas/LCQPow}} \]
Version \texttt{v0.1.0} was used for this paper. This repository contains three submodules, which have to be initialized after cloning the repository. Those modules are the QP solvers qpOASES~\cite{Ferreau2014} and OSQP~\cite{osqp}, and the unit test framework GoogleTest~\cite{gtest}. The solver can be called either directly through \cpp~or through its Matlab interface. The user options with default values and feasible range are listed in \tabref{tab:user:options}.
\begin{table}
    \centering
    \ra{1.3}
    \caption{User options with their default values and feasible range}
    \begin{tabular}{@{}llll@{}}\toprule
        ~Parameter Name &
        Default Value &
        Feasible Values &
        Section \\ \midrule
        ~{\stationarityTolerance} & $\expnumber{1}{+6}\cdot\epsmach$ & $\R_{>0}$ & \secref{sec:termination} \\
        ~{\complementarityTolerance} & $\expnumber{1}{+3}\cdot\epsmach$ & $\R_{>0}$ & \secref{sec:termination} \\
        ~{\initialPenaltyParameter} & $\expnumber{1}{-2}$ & $\R_{>0}$ & \secref{sec:penalty:homotopy} \\
        ~\penaltyUpdateFactor & 2 & $\R_{>1}$ & \secref{sec:penalty:homotopy} \\
        ~{\solveZeroPenaltyFirst} & 1 & $\{0, 1\}$ & \secref{sec:initialization} \\
        ~{\maxIterations} & $\expnumber{1}{+3}$ & $\N$ & \secref{sec:termination} \\
        ~{\maxRho} & $\expnumber{1}{+4}$ & $\R_{>0}$ & \secref{sec:termination} \\
        ~{\printLevel} & 2 & $\{0, 1, 2\}$ & \secref{sec:print:level} \\
        ~{\qpSolver} & 0 & $\{0, 1, 2\}$ & \secref{sec:qp:solvers} \\
        ~{\nDynamicPenalty} & 3 & $\N$ & \secref{sec:dynamic:penalty} \\
        ~{\etaDynamicPenalty} & 0.9 & $(0,1)$ & \secref{sec:dynamic:penalty} \\
        \bottomrule
    \end{tabular}
    \label{tab:user:options}
\end{table}

\section{Convergence Analysis} \label{sec:convergence}
We now draw our attention to the local convergence behavior of the above introduced algorithm. We begin by revisiting in more detail the properties introduced in~\cite{Hall2021}: a relationship between the minimizers of the inner loop problem with the KKT points of the outer loop problem (\lemref{lem:inner:outer:stationarity}); and strict merit function descent in each inner loop iterate until convergence is reached (\theoref{theo:direction:of:descent}). This section is concluded with the local convergence statement~\theoref{theo:single:solve}, which shows that the algorithm converges in one step once the active sets of the complementarity pairs of the current iterate coincide with those of a strongly stationary point.
\begin{lemma}\label{lem:inner:outer:stationarity}
    Let $(\xkj, \rho_k)$ be a feasible iterate of~\eqref{min:LCQP:pen:inner}. Then the respective inner loop minimizer $\xkj^\ast$ is equal to $\xkj$ iff $\xkj$ is a \ac{kkt} point of the outer loop problem~\eqref{min:LCQP:pen:outer}.
\end{lemma}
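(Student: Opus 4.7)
The plan is to exploit the fact that the inner loop problem~\eqref{min:LCQP:pen:inner} and the outer loop problem~\eqref{min:LCQP:pen:outer} share the same linear feasible set and, crucially, produce the same objective gradient when both are evaluated at the linearization point $\xkj$. This reduces the claim to matching up two sets of KKT conditions.

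First I would compute the two gradients explicitly. The outer loop objective~\eqref{min:LCQP:pen:outer:obj} has gradient $(Q + \rho_k C)x + g_k$, which at $x = \xkj$ evaluates to $Q\xkj + \rho_k C\xkj + g_k$. The inner loop objective~\eqref{min:LCQP:pen:inner:obj} has gradient $Qx + (g_k + \rho_k C \xkj)$, which at $x = \xkj$ also evaluates to $Q\xkj + \rho_k C\xkj + g_k$. Since the constraint set $\{x : \ell_{\tA} \leq \tA x \leq u_{\tA}\}$ (which satisfies LICQ by the standing assumption on $\tomega$) is identical in both problems, the stationarity equation and the sign/complementary-slackness conditions on the dual variables at $\xkj$ are literally the same system in both cases.

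For the "$\Leftarrow$" direction I would argue as follows: if $\xkj$ is a KKT point of the outer loop problem with multipliers $y$, then by the gradient identity those same $y$ satisfy the KKT conditions of the inner loop problem at $\xkj$. Because $Q \succ 0$, the inner loop subproblem is a strictly convex QP, so its KKT conditions are both necessary and sufficient for global optimality; hence $\xkj^\ast = \xkj$. For the "$\Rightarrow$" direction, if $\xkj^\ast = \xkj$, then $\xkj$ is the (unique) minimizer of a convex QP on a LICQ-regular feasible set, so it admits multipliers $y$ satisfying the inner loop KKT system. Applying the gradient identity in reverse, the same $y$ certify $\xkj$ as a KKT point of~\eqref{min:LCQP:pen:outer}.

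There is no real obstacle here; the proof is essentially the observation that the linearization $\thetakj$ of the penalty $\rho_k \varphi$ at $\xkj$ is first-order accurate, so stationarity of the linearized problem at the expansion point coincides with stationarity of the original problem at that same point. The only care needed is to state explicitly that we do not need strong convexity of the outer loop objective (which may be indefinite for large $\rho_k$), but only the equivalence of first-order conditions, and to invoke strict convexity of the inner QP solely to conclude uniqueness in the forward direction.
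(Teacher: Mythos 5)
Your proof is correct: the gradient identity $\nabla\thetakj(\xkj) = Q\xkj + g_k + \rho_k C\xkj = \nabla\psi_k(\xkj)$ together with the shared linear feasible set makes the two KKT systems at $\xkj$ identical, and strict convexity of the inner QP converts its KKT conditions into the statement $\xkj^\ast = \xkj$. The paper does not spell this out but simply defers to Lemma~4.1 of the cited SCP survey, and your argument is precisely the standard one underlying that reference, so the approaches coincide.
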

\begin{proof}
    For a proof of this standard result we refer to~\cite[Lemma 4.1]{messerer2021survey}.
\end{proof}

\begin{theorem}\label{theo:direction:of:descent}
    Given $\xkj \in \tomega$ with inner loop step $\pkj = \xkj^\ast - \xkj$, the merit function at $\xkj$ is nonincreasing in direction $\pkj$, i.e.,
    \begin{equation}\label{eq:descent}
        \nabla \psi_k(\xkj)^\top (\xkj^\ast - \xkj) \leq 0.
    \end{equation}

    Furthermore, if $\xkj$ is not a stationary point of~\eqref{min:LCQP:pen:outer} (with respect to $\rho_k$), then
    \begin{equation}\label{eq:strict:descent}
        \nabla \psi_k(\xkj)^\top (\xkj^\ast - \xkj) < 0.
    \end{equation}
\end{theorem}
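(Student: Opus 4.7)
The proof hinges on a single observation: although the inner loop subproblem replaces the full penalty $\varphi$ by its first-order Taylor expansion at $\xkj$, the resulting quadratic model $\thetakj$ and the merit function $\psi_k$ share the same gradient at the expansion point $\xkj$. Indeed, both $\nabla \psi_k(\xkj) = (Q + \rho_k C)\xkj + g_k$ and $\nabla \thetakj(\xkj) = Q \xkj + (g_k + \rho_k C \xkj)$ equal $Q \xkj + g_k + \rho_k C \xkj$. Hence establishing a descent direction for $\thetakj$ at $\xkj$ is equivalent to establishing one for $\psi_k$.

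The plan is therefore as follows. First, I would exploit the fact that $\thetakj$ is an \emph{exact} quadratic with Hessian $Q$, so that
\begin{equation*}
    \thetakj(\xkj^\ast) = \thetakj(\xkj) + \nabla \thetakj(\xkj)^\top \pkj + \tfrac{1}{2} \pkj^\top Q \pkj
\end{equation*}
holds with equality. Since $\xkj^\ast$ is by construction the minimizer of $\thetakj$ over the convex set $\tomega$ and $\xkj \in \tomega$, we have $\thetakj(\xkj^\ast) \leq \thetakj(\xkj)$. Rearranging yields
\begin{equation*}
    \nabla \thetakj(\xkj)^\top \pkj \leq -\tfrac{1}{2} \pkj^\top Q \pkj,
\end{equation*}
and using the gradient identity above this rewrites as
\begin{equation*}
    \nabla \psi_k(\xkj)^\top \pkj \leq -\tfrac{1}{2} \pkj^\top Q \pkj \leq 0,
\end{equation*}
since $Q \succ 0$. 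This proves~\eqref{eq:descent}.

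For the strict inequality~\eqref{eq:strict:descent}, I would invoke \lemref{lem:inner:outer:stationarity}: if $\xkj$ is not a KKT point of the outer problem~\eqref{min:LCQP:pen:outer}, then $\xkj^\ast \neq \xkj$, hence $\pkj \neq 0$, and positive definiteness of $Q$ gives $\pkj^\top Q \pkj > 0$. Chaining this into the bound above yields $\nabla \psi_k(\xkj)^\top \pkj < 0$, as required.

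I do not anticipate any real obstacle; the only subtle point is ensuring that the Taylor-based construction of $\thetakj$ indeed produces the gradient-matching property at $\xkj$ (it does, by definition of the first-order expansion), after which both claims reduce to applying the exact quadratic expansion of $\thetakj$ and positive definiteness of $Q$. A brief remark pointing out the gradient identity between $\psi_k$ and $\thetakj$ at $\xkj$ would make the argument transparent.
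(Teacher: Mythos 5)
Your proof is correct and follows essentially the same route as the paper's: both rest on the gradient identity $\nabla\psi_k(\xkj) = \nabla\thetakj(\xkj)$, optimality of $\xkj^\ast$ for the convex subproblem, and \lemref{lem:inner:outer:stationarity} to obtain $\pkj \neq 0$ in the strict case. The only (cosmetic) difference is that you use the exact quadratic expansion of $\thetakj$ where the paper invokes the first-order convexity inequality, which incidentally yields the slightly sharper quantitative bound $\nabla\psi_k(\xkj)^\top \pkj \leq -\tfrac{1}{2}\pkj^\top Q \pkj$.
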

\begin{proof}
    Since $\xkj^\ast$ is the global minimum of the inner loop optimization problem, the following relation holds
    \begin{equation}\label{ineq:minimizer}
        \thetakj(\xkj^\ast) \leq \thetakj(x),
    \end{equation}
    where $x$ is any feasible point of~\eqref{min:LCQP:pen:outer} and $\thetakj$ is as defined in~\eqref{eq:quadratic:model}. Since $\thetakj$ is convex and differentiable it holds for any $a, b \in \R^n$ that
    \begin{equation}
        \nabla \thetakj(a)^\top (b-a) \leq \thetakj(b) - \thetakj(a).
    \end{equation}
    This property provides descent for the quadratic model
    \begin{equation}\label{eq:quadratic:model:descent}
        \nabla \thetakj(\xkj)^\top \pkj \leq \thetakj(\xkj^\ast) - \thetakj(\xkj) \leq 0.
    \end{equation}
    Note that this inequality becomes strict if $\xkj \neq \xkj^\ast$, since~\eqref{ineq:minimizer} becomes strict. Further, we have
    \begin{subequations}\label{eq:model:and:merit:descent:agree}
        \begin{align}
            \nabla \psi(\xkj, \rho_k)^\top \pkj &= (Q \xkj + \rho_k C \xkj + g)^\top \pkj \\
            &= \nabla \thetakj(\xkj)^\top \pkj,
        \end{align}
    \end{subequations}
    which shows that the directional derivatives of the merit function and quadratic model at $\xkj$ towards $\pkj$ agree. Inequality~\eqref{eq:descent} immediately follows.

    Assume that $\xkj$ is not outer loop stationary. Then \lemref{lem:inner:outer:stationarity} yields $\xkj^\ast \neq \xkj$. As remarked before, the inequality~\eqref{eq:quadratic:model:descent} becomes strict, and plugging in~\eqref{eq:model:and:merit:descent:agree} concludes the claim. \myqed
\end{proof}

\begin{theorem}\label{theo:single:solve}
    Let $(x^\ast, y^\ast)$ be a strongly stationary point of~\eqref{min:LCQP} and let $\xkj \in \Omega$ be an iterate of the algorithm such that $\mathcal{L}^\mathrm{l}(\xkj) = \mathcal{L}^\mathrm{l}(x^\ast)$ and $\mathcal{R}^\mathrm{l}(\xkj) = \mathcal{R}^\mathrm{l}(x^\ast)$, i.e., the respective active complementarity sets agree. Further, let $\rho_k$ be sufficiently large. Then the next inner loop iterate will coincide with the strongly stationary point, i.e., $x_{k,j+1} = x^\ast$, and the algorithm will return the strongly stationary point $(x^\ast, y^\ast)$.
\end{theorem}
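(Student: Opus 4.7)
The plan is to show three things in sequence: (i) $x^*$ is the unique minimizer of the inner-loop subproblem~\eqref{min:LCQP:pen:inner} at linearization point $\xkj$; (ii) the optimal step length~\eqref{eq:step:length:formula} evaluates to $\alphakj=1$, giving $x_{k,j+1}=x^*$; and (iii) the next iteration triggers termination with dual variables that recover $y^*$.

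For (i), strict convexity of the inner subproblem means it suffices to exhibit KKT multipliers at $x^*$. Motivated by~\eqref{eq:dual:switch} I would try $y_A := y_A^*$, $y_L := y_L^* + \rho_k(R\xkj-\ell_R)$, $y_R := y_R^* + \rho_k(L\xkj-\ell_L)$. Expanding $g_k = g - \rho_k(R^\top\ell_L + L^\top\ell_R)$ and $C = L^\top R + R^\top L$ in the inner-loop stationarity equation, the added $\rho_k$-terms absorb the penalty gradient and one is left with precisely the strong-stationarity equation of the LCQP, which holds by hypothesis. The sign conditions are then checked index by index, exploiting $\mathcal{L}^\mathrm{l}(\xkj)=\mathcal{L}^\mathrm{l}(x^*)$, $\mathcal{R}^\mathrm{l}(\xkj)=\mathcal{R}^\mathrm{l}(x^*)$, and the feasibility partition $\mathcal{L}^\mathrm{l}(x^*)\cup\mathcal{R}^\mathrm{l}(x^*)=\{1,\dots,n_c\}$: the penalty correction is either zero (whenever the complementary lower bound is active) or already sign-correct, in all cases except $i\in\bar{\mathcal{L}}^\mathrm{l}(x^*)$ (resp.\ $\bar{\mathcal{R}}^\mathrm{l}(x^*)$). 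There the MPCC multiplier $(y_L^*)_i$ is sign-free while $R_i\xkj-\ell_{R_i}>0$, and choosing $\rho_k$ large enough forces $(y_L)_i\geq 0$. This componentwise sign check is the main obstacle of the proof, and fixes the precise meaning of "sufficiently large $\rho_k$", in direct analogy with~\eqref{eq:penalty:bound} but evaluated at $\xkj$ rather than $x^*$.

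For (ii), set $\pkj = x^* - \xkj$. The key identity is $\pkj^\top C\pkj = 2(L\pkj)^\top(R\pkj) = 2\sum_i (L_i\pkj)(R_i\pkj)$, and for every $i$ one of the two factors vanishes: if $i\in\mathcal{L}^\mathrm{l}(x^*)$ then $L_i\xkj = L_ix^* = \ell_{L_i}$ so $L_i\pkj=0$, and analogously on $\mathcal{R}^\mathrm{l}(x^*)$. Since every index belongs to at least one of the two sets, $\pkj^\top C\pkj=0$, whence formula~\eqref{eq:step:length:formula} enters its "else" branch and returns $\alphakj=1$, giving $x_{k,j+1}=x^*$.

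For (iii), replaying (i) with linearization point $x^*$ shows that the recomputed inner-loop minimizer is still $x^*$, so the inner loop is stationary at a KKT point of~\eqref{min:LCQP:pen:outer}. Since $x^*\in\Omega$ we have $\varphi(x^*)=0$, and the complementarity test on line~\ref{line:comp:check} of~\algoref{algo:LCQP} triggers return. The penalty-level multipliers at that iterate are exactly those produced by~\eqref{eq:dual:switch} evaluated at $x^*$, so inverting the same relation as in part~\ref{theo:penalty:convergence:2} of~\theoref{theo:penalty:convergence} recovers the advertised multipliers $y^*$.
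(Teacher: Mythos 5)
Your proposal is correct and follows essentially the same route as the paper's proof: construct the shifted multipliers $\bar y_L = y_L^\ast + \rho_k(R\xkj-\ell_R)$, $\bar y_R = y_R^\ast + \rho_k(L\xkj-\ell_L)$ (your uniform formula agrees with the paper's case-split because the correction vanishes whenever the complementary lower bound is active), verify inner-loop KKT at $x^\ast$ for $\rho_k$ large enough to fix the signs on $\bar{\mathcal{L}}^\mathrm{l},\bar{\mathcal{R}}^\mathrm{l}$, show $\pkj^\top C\pkj=0$ so that $\alphakj=1$, and conclude termination. The only cosmetic differences are that the paper derives the existence of a sufficiently large $\rho_k$ by rescaling the bound~\eqref{eq:penalty:bound} at $x^\ast$ via factors $\xi_i^L,\xi_i^R$, and argues $\pkj^\top C\pkj=0$ from $\varphi\equiv 0$ along the segment rather than by your (equally valid, arguably cleaner) termwise computation.
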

\begin{proof}
    Similar to \theoref{theo:penalty:convergence} we define
    \begin{subequations}\label{eq:duals:proof:single:solve}
        \begin{align}
            \bar{y}_A &= y_A^\ast, \\
            \bar{y}_{L_i} &= y_{L_i}^\ast,                             &&\mathrm{for~} i \notin \mathcal{L}^\mathrm{l}(\xkj),\\
            \bar{y}_{R_i} &= y_{R_i}^\ast,                             &&\mathrm{for~} i \notin \mathcal{R}^\mathrm{l}(\xkj),\\
            \bar{y}_{L_i} &= y_{L_i}^\ast + \rho_k (R_i \xkj - \ellRi) \geq 0, &&\mathrm{for~} i \in \mathcal{L}^\mathrm{l}(\xkj), \label{eq:duals:proof:single:solve:L}\\
            \bar{y}_{R_i} &= y_{R_i}^\ast + \rho_k (L_i \xkj - \ellLi) \geq 0, &&\mathrm{for~} i \in \mathcal{R}^\mathrm{l}(\xkj), \label{eq:duals:proof:single:solve:R}
        \end{align}
    \end{subequations}
    where $\rho_k$ is assumed to be large enough to satisfy the inequalities in~\eqref{eq:duals:proof:single:solve:L} and~\eqref{eq:duals:proof:single:solve:R}. 
    
    The existence of such a $\rho_k$ can be seen as follows. Applying \theoref{theo:penalty:convergence} shows that there certainly exists such a penalty parameter if $\xkj$ is replaced by $x^\ast$, and let us denote an adequate choice by~$\tilde{\rho}_k$. Let us assume $\xkj \neq x^\ast$ and that their active sets of the complementarity pairs coincide. Then $L_i \xkj - \ellLi > 0$ iff $L_i x^\ast - \ellLi > 0$ and $R_i \xkj - \ellRi > 0$ iff $R_i x^\ast - \ellRi > 0$. In such cases we may express 
    \begin{equation}
        L_i \xkj - \ellLi = \xi_i^L (L_ix^\ast - \ellLi) > 0, \quad R_i \xkj - \ellRi = \xi_i^R (R_ix^\ast - \ell_{R_i}) > 0
    \end{equation}
    for some $\xi_i^L,\xi_i^R > 0$, and choose $\rho_k = \max_i\{\xi_i^L, \xi_i^R\} \tilde{\rho}_k$. Thus the existence of a penalty parameter large enough to satisfy~\eqref{eq:duals:proof:single:solve} transfers immediately from~\theoref{theo:penalty:convergence}.

    We now want to show that $\xkj^\ast = x^\ast$. Note that the dual variables $(\bar{y}_A, \bar{y}_L, \bar{y}_R)$ respect the sign conditions required for a KKT point of~\eqref{min:LCQP:pen:inner}. Strong stationarity of $(x^\ast, y^\ast)$ yields
    \begin{subequations}
        \begin{align}
            0 &= Qx^\ast + g - A^\top y_A^\ast - L^\top y_L^\ast - R^\top y_R^\ast \\
            &= Qx^\ast + g - A^\top \bar{y}_A - L^\top(\bar{y}_L - \rho_k (R \xkj - \ell_R)) - R^\top (\bar{y}_R - \rho_k (L \xkj - \ell_L)) \\
            &= Qx^\ast + g + \rho_k ((L^\top R + R^\top L) \xkj + g_\varphi)- A^\top \bar{y}_A - L^\top \bar{y}_L - R^\top \bar{y}_R \\
            &= Qx^\ast + g_k + \rho_k C \xkj - A^\top \bar{y}_A - L^\top \bar{y}_L - R^\top \bar{y}_R,\label{eq:outer:loop:stationarity}
        \end{align}
    \end{subequations}
    where we recall $g_\varphi = - (L^\top \ell_R + R^\top \ell_L)$ and $g_k = g + \rho_k g_\varphi$. This shows that $(x^\ast, \bar{y}_A, \bar{y}_L, \bar{y}_R)$ is the unique KKT point of the strictly convex inner loop problem~\eqref{min:LCQP:pen:inner}. Thus $x^\ast = \xkj^\ast$.

    It remains to be shown that the globalization scheme will not interfere. More precisely, we must show $\alphakj^\ast = 1$. Note that $\varphi(\xkj + \alphakj \pkj) = 0$ for all $0 \leq \alphakj \leq 1$, as we never leave the complementarity satisfying active sets $\mathcal{L}^\mathrm{l}(\xkj) = \mathcal{L}^\mathrm{l}(x^\ast)$ and $\mathcal{R}^\mathrm{l}(\xkj) = \mathcal{R}^\mathrm{l}(x^\ast)$ (recall that the respective constraints are linear). Consequently, $\varphi(\xkj)$ can not be curved along $\pkj$, i.e.,
    \begin{equation}
        \pkj^\top C \pkj = 0.
    \end{equation}
    This leads to a full step $\alphakj^\ast = 1$ according to the step length formula~\eqref{eq:step:length:formula}. Finally
    \begin{equation}
        x_{k,j+1} = \xkj + \alphakj^\ast (\xkj^\ast - \xkj) = \xkj^\ast = x^\ast.
    \end{equation}
    Note that~\eqref{eq:outer:loop:stationarity} provides stationarity of the outer loop. The termination conditions of the outer loop are thus satisfied as well.
    \myqed
\end{proof}

\section{Numerical Experiments} \label{sec:testing}
\begin{table}
  \centering
  \ra{1.3}
  \caption{Number of variables, constraints and complementarity constraints listed for each problem test set. In this table we denote the mean value of a vector $x \in \R^n$ by $\bar{x}$.}
  \begin{tabular}{@{}lcccccc@{}}\toprule
      ~Problem Set & $\mathrm{range}(n)$ & $\widebar{n}$ & $\mathrm{range}(n_A)$ &
      $\widebar{n}_A$ & $\mathrm{range}(n_c)$ & $\widebar{n}_c$ \\ \midrule
      ~MacMPEC       & $(2,~2000)$ & $180$ & $(0,~1500)$ & $97$ & $(1,~1000)$ & $81$ \\
      ~IVOCP         & $(151,~301)$ & $226$ & $(50,~100)$ & $75$ & $(100,~200)$ & $150$ \\
      ~Moving Masses & $(554,~1104)$ & $829$ & $(034,~604)$ & $454$ & $(200,~400)$ & $300$ \\  \bottomrule
  \end{tabular}
  \label{tab:benchmark:ranges}
\end{table}
We consider three different benchmarks: the LCQP subset of the MacMPEC benchmark~\cite{leyffer2000MacMPEC}; an initial value \ac{ocp} of a discontinuous dynamic with a single switch; and an \ac{ocp} with the goal of bringing a system of moving masses to a steady state, in which the complementarities arise from a Coulomb friction model.
\tabref{tab:benchmark:ranges} gives an overview of the variable, constraint and complementarity dimensions of each problem set. The benchmarks are available at
\[ \href{https://github.com/hallfjonas/LCQPTest}{\texttt{https://github.com/hallfjonas/LCQPTest}} \]
and we encourage readers to reproduce the outcomes.

For each benchmark we compare the performance profile as introduced in~\cite{dolan2002benchmarking}. The used performance metric is the fraction of problems solved within a time factor $\tau$ compared to the fastest solver of each problem. Let $\mathcal{P}$ denote the set of problems of a given benchmark. Then, for a solver $s$, this fraction is denoted by
\begin{equation*}
  P_s = \frac{\{p \in \mathcal{P} : r_{p,s} \leq \tau\}}{| \mathcal{P} |},
\end{equation*} 
where the ratio $r$ is defined by 
\begin{equation*}
    r_{p,s} = \frac{\mathrm{CPU~time~of~}s\mathrm{~to~solve~}p}{\mathrm{fastest~solver~CPU~time~to~solve~}p}.
\end{equation*}
On failed convergence, we set this value to $\infty$.

We consider various methods: \solverref\ with qpOASES~\cite{Ferreau2014}; \solverref\ with qpOASES utilizing the sparse linear solver MA57~\cite{ma57}; \solverref\ with OSQP~\cite{osqp}; Gurobi~\cite{gurobi2021} for solving the \ac{miqp} reformulation; IPOPT~\cite{wachter2006implementation} for solving the respective problems~\eqref{min:LCQP:pen},~\eqref{min:LCQP:nlp:smooth},~\eqref{min:LCQP:nlp:reg}; and finally IPOPT NLP, which solves~\eqref{min:LCQP:nlp:reg} for a sufficiently small $\sigma > 0$ without the homotopy procedure. The solver IPOPT is called through its CasADi interface~\cite{Andersson2018}. We carefully chose to use internal solver timings in order to remove as much overhead as possible, though some timings may still include some amount of overhead. We remark that the timings including the overhead yield very similar results; mostly the \ac{miqp} method varies as its models are rebuilt by Gurobi. 

\subsection{The LCQP subset of MacMPEC}
The MacMPEC problem set~\cite{leyffer2000MacMPEC} contains a variety of optimization problems with complementarity constraints, out of which we selected the ones fitting the \ac{lcqp} framework, i.e., the ones that have a convex quadratic objective function and linear constraints. The set contains a range of small to large problems, though the vast majority of the problems have small dimensions. The resulting subset contains 39 problems modeled in AMPL~\cite{fourer2003ampl}. In order to run the problems with our solver we translated the examples into Matlab.

\begin{figure}
  \centering
  \includegraphics[width=\linewidth,center]{./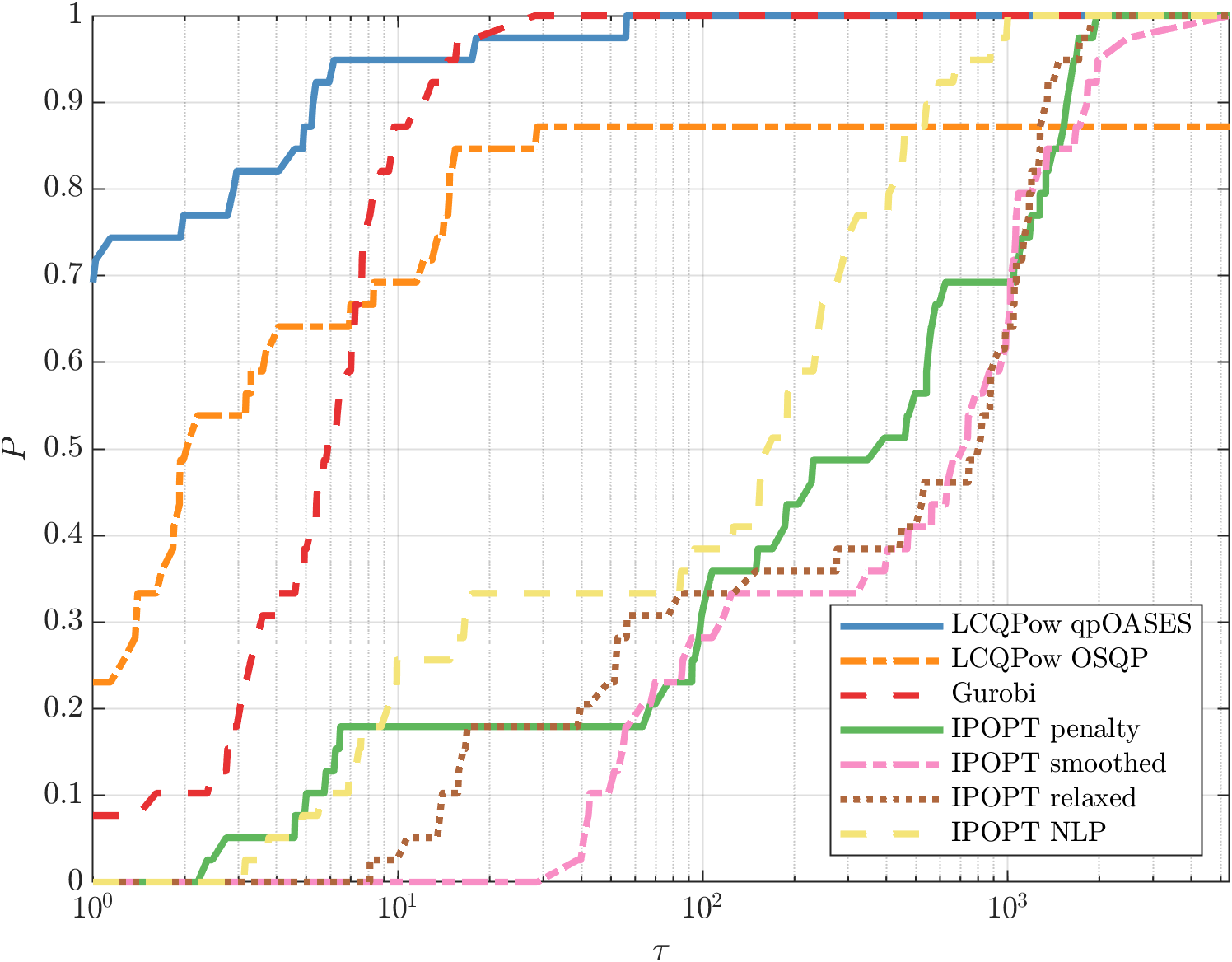}
  \caption{Performance plot comparing various solution variants for the \ac{lcqp} subset of MacMPEC~\cite{leyffer2000MacMPEC}.}
  \label{fig:benchmark:MacMPEC:performance}
\end{figure}
\figref{fig:benchmark:MacMPEC:performance} shows that \solverref\ with qpOASES mostly outperforms all other methods. Tables \ref{tab:MacMPEC:LCQPow:Gurobi} and \ref{tab:MacMPEC:IPOPT} show the objective values obtained by the various methods for each problem. On top of efficient solution computation, this supports that our method is able to find the global solution for many of the posed problems. However, for some problems the solver gets stuck in local solutions. In this benchmark, \solverref\ with OSQP also achieves fast results for many problems, however, it is less robust and convergence fails for about a quarter of the problems.

\subsection{Initial Value Problem}\label{sec:experiments:IVOCP}
We now consider an initial value finding problem which was introduced by Stewart and Anitescu~\cite[Section 2]{stewart2010optimal}. This numerical test example contains a dynamical system with a discontinuous right hand side, in which a single switch occurs. The position of the switch is solely dependent on the initial value. The optimization problem in continuous time is given by
\begin{mini!}
	{x_0 \in \R,~x(\cdot) \in \mathcal{C}^0}{\int_0^2 x(t)^2 \text{d}t + (x(2) - 5/3)^2}
	{\label{eq:IVOCP}}{}
    \addConstraint{x(0)}{= x_0, \label{eq:IVOCP:initial:value}}
    \addConstraint{\dot{x}(t)}{\in 2 - \sign (x(t)),}{\quad t \in [0, 2] \label{eq:IVOCP:dynamics}}.
\end{mini!}
The discontinuous dynamics~\eqref{eq:IVOCP:dynamics} describe a \ac{fdi} and can be reformulated into a linear complementarity system~\cite{nurkanovic2020limits}. We then discretize the system using the implicit Euler scheme with $N$ nodes. The resulting \ac{lcqp} reads as
\begin{mini!}
  {\substack{x_0,\dots,x_N \in \R \\ y_0,\dots,y_{N-1} \in \R \\ \lambda^-_0,\dots,\lambda^-_{N-1} \in \R}}{\sum_{k=0}^{N-1} h x_k^2 + \left(x_N-\frac{5}{3}\right)^2}
	{\label{eq:IVOCP:LCQP}}{}
    \addConstraint{x_k - x_{k-1} - h \bigl(3(1 - y_k) + y_k \bigr)}{= 0, \qquad \mathrm{for~}k=1,\dots,N,}
    \addConstraint{0 \leq x_k + \lambda^-_k \perp 1 - y_k }{\geq 0, \qquad \mathrm{for~}k=1,\dots,N,}
    \addConstraint{0 \leq \lambda^-_k \perp y_k }{\geq 0, \qquad \mathrm{for~}k=1,\dots,N,}
\end{mini!}
where $h = T/N$ is the discretization step size. The benchmark is created by varying $N \in \{50, 55, \dots, 100\}$ and the initial guess $x_0 \in \mathcal{X}_0$ for the initial value, where $\mathcal{X}_0$ contains $10$ equidistant values between $-1.9$ and $-0.9$.

\figref{fig:benchmark:IVOCP:timings} presents the performance profile comparing various methods, showing that~\solverref\ outperforms the other methods; particularly the OSQP variant achieves fast results. This significant speed-up does not suffer from a trade-off in terms of solution quality, as it is able to find the same solutions as the \ac{miqp} reformulation as shown in~\figref{fig:benchmark:IVOCP:objective}. Most homotopy approaches solved via IPOPT find the same solutions in this benchmark. The NLP reformulation achieves convergence with similar speed compared to the qpOASES variant of~\solverref, however, its solution quality highly depends on the initialization.

\begin{figure}
  \centering
  \begin{subfigure}[b]{.9\linewidth}
    \includegraphics[width=\textwidth]{./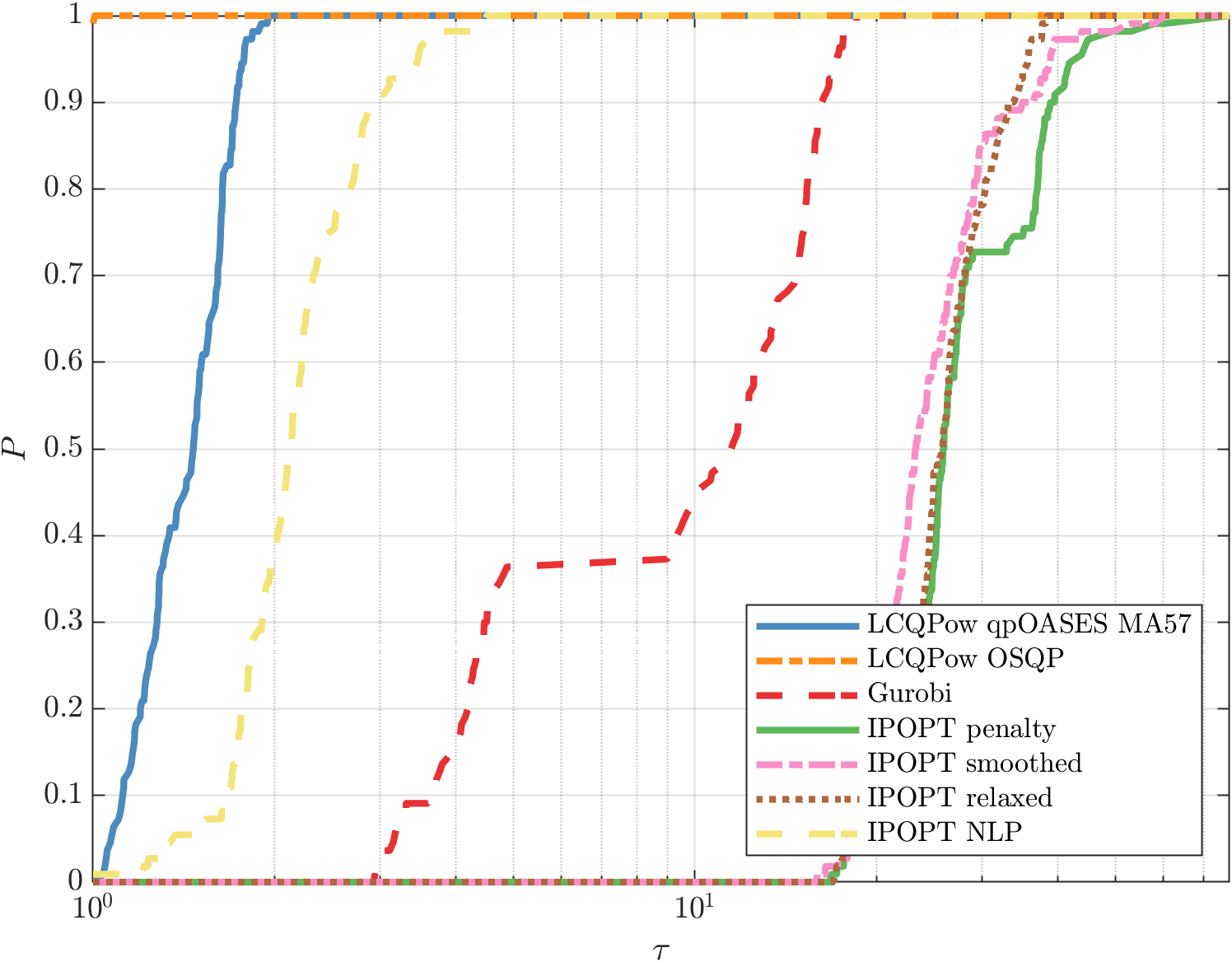}
    \caption{timings}
    \label{fig:benchmark:IVOCP:timings}
  \end{subfigure} \\%
  \begin{subfigure}[b]{.9\linewidth}
    \includegraphics[width=\textwidth]{./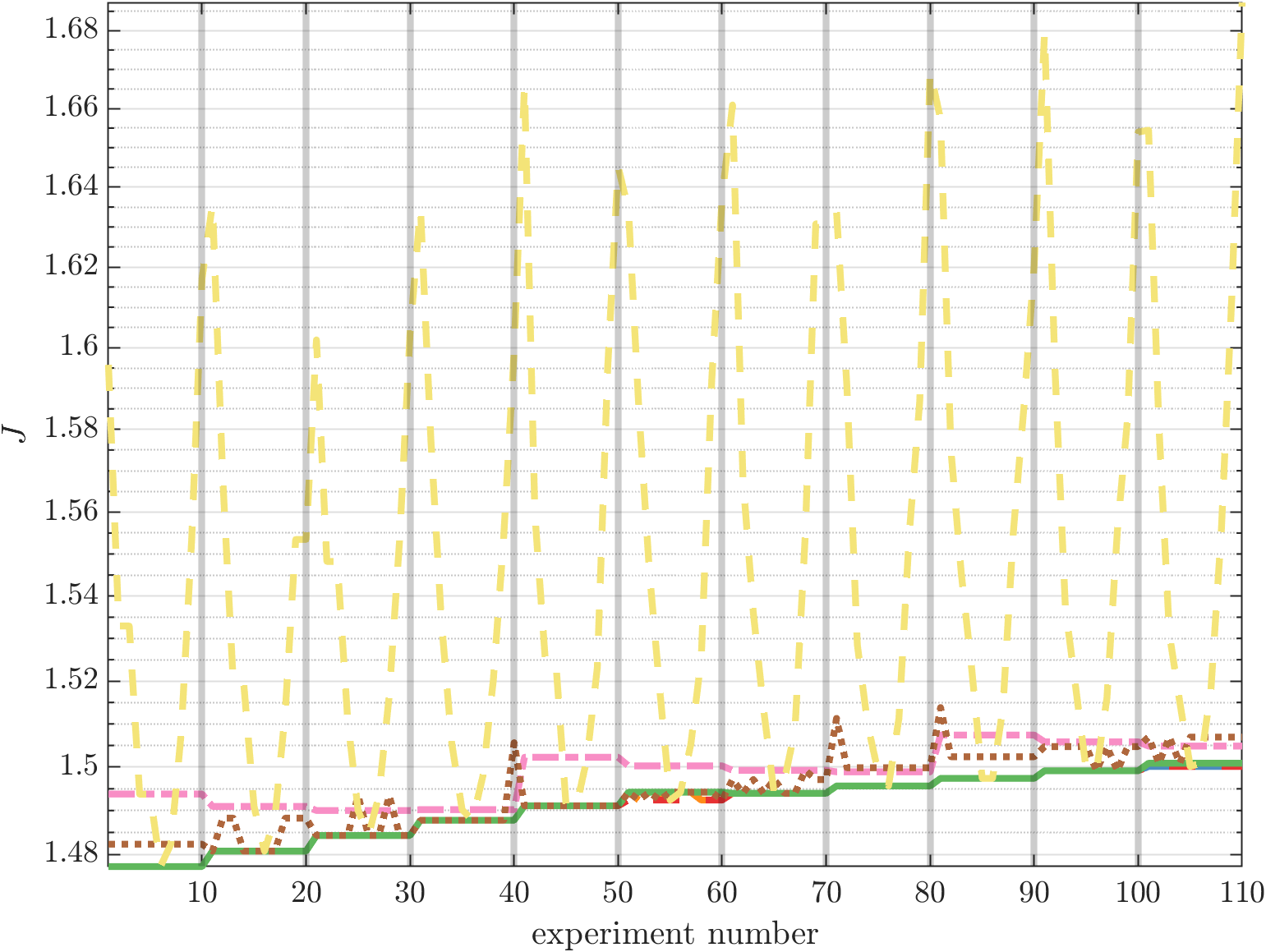}
      \caption{objective}
      \label{fig:benchmark:IVOCP:objective}
  \end{subfigure}%
  \caption{Performance profile and objective function comparison for the Initial Value \ac{ocp}. The vertical lines on the right indicate a change in the number of discretization intervals from $N=50$ (experiments 1 through 10) up to $N=100$ (experiments 101 through 110). For a fixed $N$, the initial guess for $x_0$ is varied from $-1.9$ (left most experiment) to $-0.9$ (right most experiment).}
  \label{fig:benchmark:IVOCP}
\end{figure}

\subsection{Moving Masses}\label{sec:experiments:MovingMasses}
We present an OCP formulation of the test problem described by Stewart in~\cite[Section 5]{stewart1996numerical}. Consider a number of $s$ springs connecting $s$ masses: the first mass $M_1$ is connected to a wall and each other mass $M_i$ is connected to its preceding mass $M_{i-1}$. We assume that the rest length of the spring has no influence in the dynamics and that the positions of the masses are given in their own coordinate frame. The position $p_i$ of mass $M_i$ is thus $0$ if the spring attached left to the mass $M_i$ is relaxed, and we assume that the masses never collide. We introduce a control $u \in \R$, which represents a force applied to the last mass $M_s$. The states are described by $x = (p, v) \in \R^{2s}$, where $p$ and $v$ capture the positions and velocities of the respective masses. The full setup is depicted in \figref{fig:MovingMasses:SetUp}.
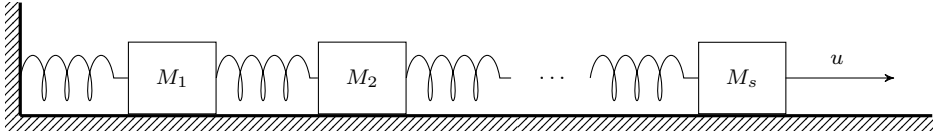
\begin{figure}
  \centering

\usetikzlibrary{patterns}

\begin{tikzpicture}
\draw[very thick] (0,0) -- (0,1.5);
\draw[very thick] (0,0) -- (12,0);
\fill [pattern = north east lines] (-0.2,0) rectangle (0,1.5);
\fill [pattern = north east lines] (-0.2,-0.2) rectangle (12,0);
\node at (2,0.5) [rectangle,draw,inner sep=3.6mm] (a) {$M_1$};
\draw[decoration={aspect=0.3, segment length=3mm, amplitude=3mm,coil},decorate]
(0,0.5) -- (a);

\node at (4.5,0.5) [rectangle,draw,inner sep=3.6mm] (b) {$M_2$};
\draw[decoration={aspect=0.3, segment length=3mm, amplitude=3mm,coil},decorate]
(2.58,0.5) -- (b);

\node at (7,0.5) [rectangle,inner sep=3.6mm] (c) {$\ldots$};
\draw[decoration={aspect=0.3, segment length=3mm, amplitude=3mm,coil},decorate]
(5.08,0.5) -- (c);

\node at (9.5,0.5) [rectangle,draw,inner sep=3.6mm] (d) {$M_s$};
\draw[decoration={aspect=0.3, segment length=3mm, amplitude=3mm,coil},decorate]
(7.5,0.5) -- (d);

\draw[->] (10.07,0.5) -- (11.5,0.5);

\node at (10.75,0.75) {$u$};
\end{tikzpicture}
  \caption{Setup of the masses and springs.}
  \label{fig:MovingMasses:SetUp}
\end{figure}

Each mass slides over the ground and introduces a frictional force. The direction of this force changes with a sign change of the respective velocity, and thus leads to discontinuous dynamics,
\begin{subequations}
  \begin{align}
    v(0) &= \bar{v}_0,\\
    p(0) &= \bar{p}_0, \\
    \dot{p} &= v, \\
    \dot{v}_i &\in F_i(x) = \begin{cases}
        ( -p_1) + (p_2 - p_1) - v_1 - 0.3 \cdot \sign(v_1), & i = 1, \\
        ( p_{i-1}-p_i) + (p_{i+1}-p_i) - v_i - 0.3 \cdot \sign(v_i), & 1 < i < s, \\
        ( p_s - p_{s-1}) - v_s - 0.3 \cdot \sign(v_s) + u, & i = s,
    \end{cases}
  \end{align}
\end{subequations}
where the initial value $\bar{x}_0 = (\bar{v}_0^\top,\bar{p}_0^\top)^\top \in \R^{2s}$ is fixed. Again, we reformulate the \ac{fdi} into the dynamic complementarity system
\begin{subequations}
  \begin{align}
    x(0) &= \bar{x}_0,\\
    \dot{p} &= v, \label{eq:MovingMasses:smooth:ODE:p}\\
    \dot{v}_i &= \begin{cases}
        ( -p_i) + (p_{i+1} - p_i) - v_1 - 0.3\cdot(2y_i(t) - 1), & i = 1, \\
        ( p_{i-1}-p_i) + (p_{i+1}-p_i) - v_i - 0.3\cdot(2y_i(t) - 1), & 1 < i < s, \\
        ( p_{i-1} - p_i) + u - v_i - 0.3\cdot(2y_i(t) - 1), & i = s,
    \end{cases} \label{eq:MovingMasses:smooth:ODE:v}\\
    0 &\leq \lambda^- + v \perp 1 - y \geq 0, \\
    0 &\leq \lambda^- \perp y \geq 0.
  \end{align}
\end{subequations}

\begin{figure}
  \centering
  \begin{subfigure}[b]{.5\linewidth}
    \raggedright
    \includegraphics[width=0.97\textwidth]{./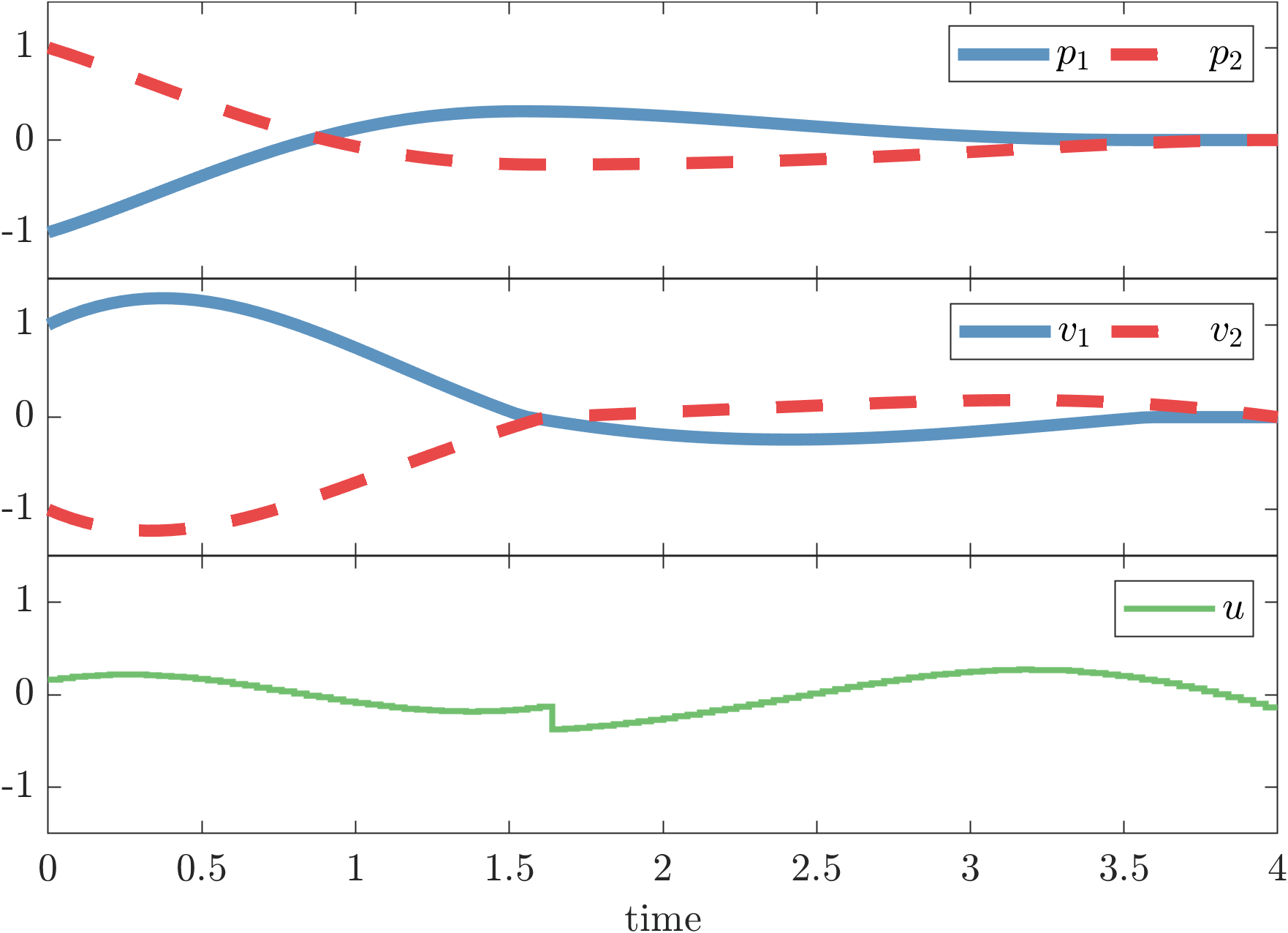}
  \end{subfigure}%
  \begin{subfigure}[b]{.5\linewidth}
    \raggedleft
    \includegraphics[width=0.97\textwidth]{./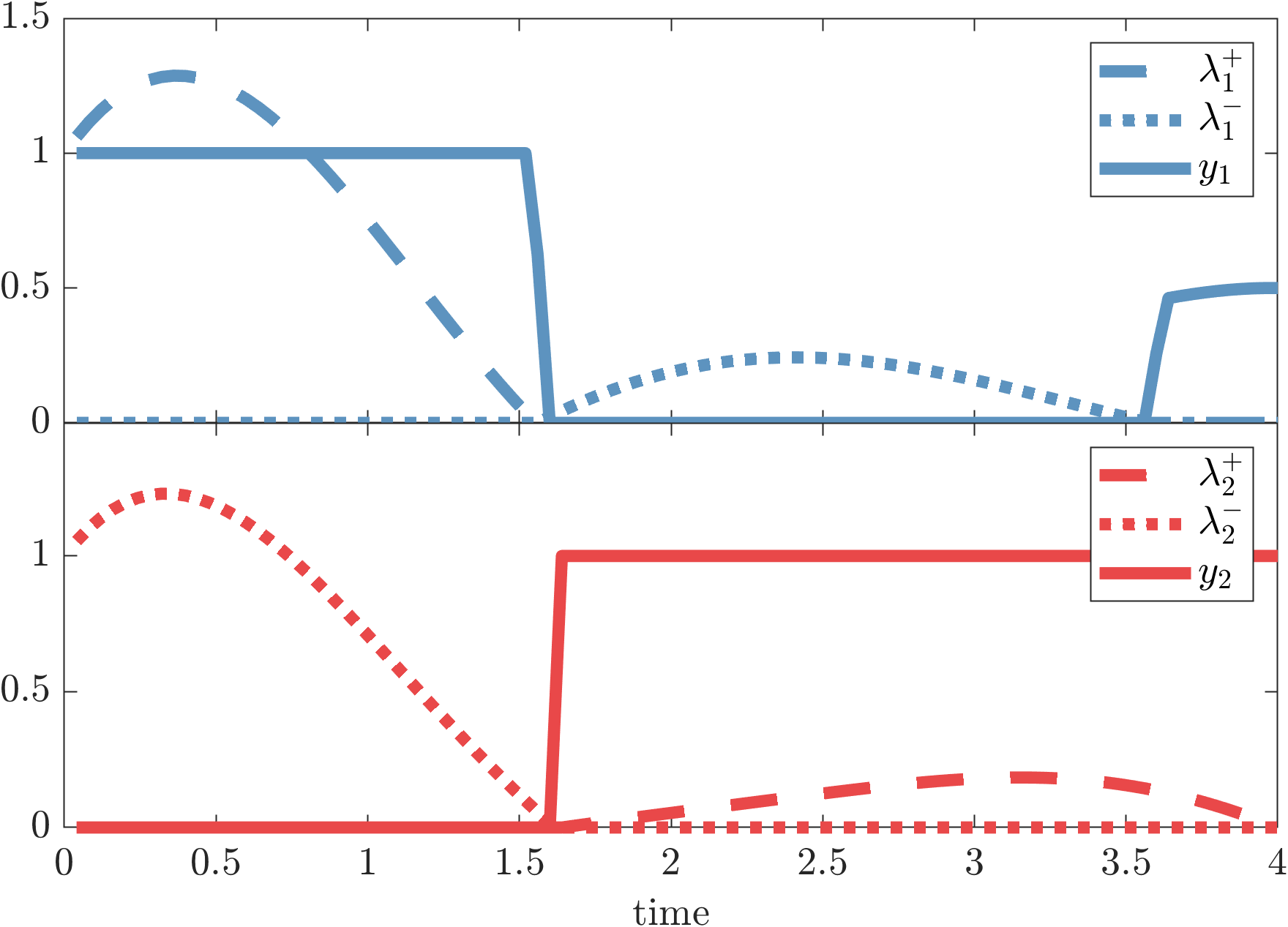}
  \end{subfigure}%
  \caption{Sample solution for $s = 2$ masses. The left plot shows the trajectories of the mass positions and velocities together with the control input. The right plot depicts the evolution of the algebraic states.}
  \label{fig:MovingMasses:trajcetories}
\end{figure}%

Let $f(x, y, \lambda, u)$ denote the right hand side of~\eqref{eq:MovingMasses:smooth:ODE:p}-\eqref{eq:MovingMasses:smooth:ODE:v} such that $\dot{x} = f(x,y,\lambda,u)$. We formulate the goal of forcing the system into the equilibrium point of the resting position, i.e., to obtain $x(T) = 0$, while penalizing the control input and equilibrium deviation at each stage. The \ac{ocp} in continuous time reads as
\begin{mini!}
  {x, u, y, \lambda^-}{\int_0^T x(t)^\top x(t) + u(t)^2 \textup{d}t}
	{\label{eq:MovingMasses}}{}
  \addConstraint{x(0)}{ = \hat{x}_0,}
  \addConstraint{\dot{x}(t)}{ = f(x(t), y(t), \lambda(t), u(t)),}
  \addConstraint{0}{\leq v(t) + \lambda^-(t) \perp 1 - y(t) \geq 0,}
  \addConstraint{0}{\leq \lambda^-(t) \perp y(t) \geq 0,}
  \addConstraint{0}{= x(T).}
\end{mini!}
This \ac{ocp} is again discretized using implicit Euler with $50 \leq N \leq 100$ nodes over a varying time range of $2 \leq T \leq 4$. A trajectory for a sample solution is shown in~\figref{fig:MovingMasses:trajcetories}. The performance profile for $s = 2$ masses is given in \figref{fig:MovingMasses:benchmark:timings}. \solverref\ with qpOASES exploiting sparsity with MA57 clears the benchmark fastest, though the relaxed and NLP variants solved via IPOPT achieve similar results. The OSQP variant of \solverref\ is less robust, as it only solves about $75\%$. However, the problems for which it succeeds are solved significantly faster than the other method. Due to the increased number of complementarity constraints, the \ac{miqp} variant is outperformed. \figref{fig:MovingMasses:benchmark:objective} shows that, up to a few exceptions, all methods find solutions of the same quality, which are most likely the same solutions.

\begin{figure}
  \centering
  \begin{subfigure}[b]{.9\linewidth}
    \includegraphics[width=\textwidth]{./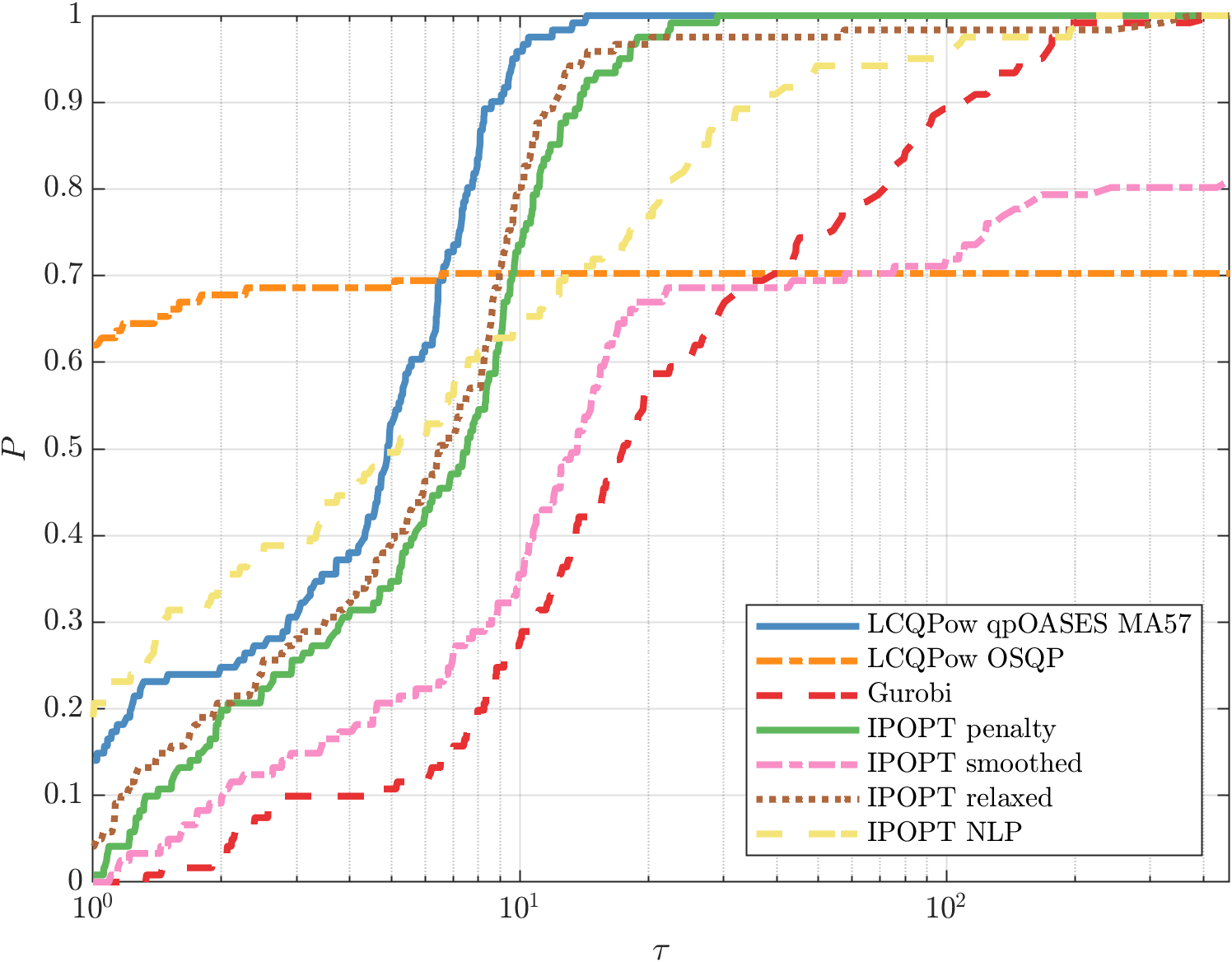}
    \caption{timings}
    \label{fig:MovingMasses:benchmark:timings}
  \end{subfigure} \\%
  \begin{subfigure}[b]{.9\linewidth}
    \includegraphics[width=\textwidth]{./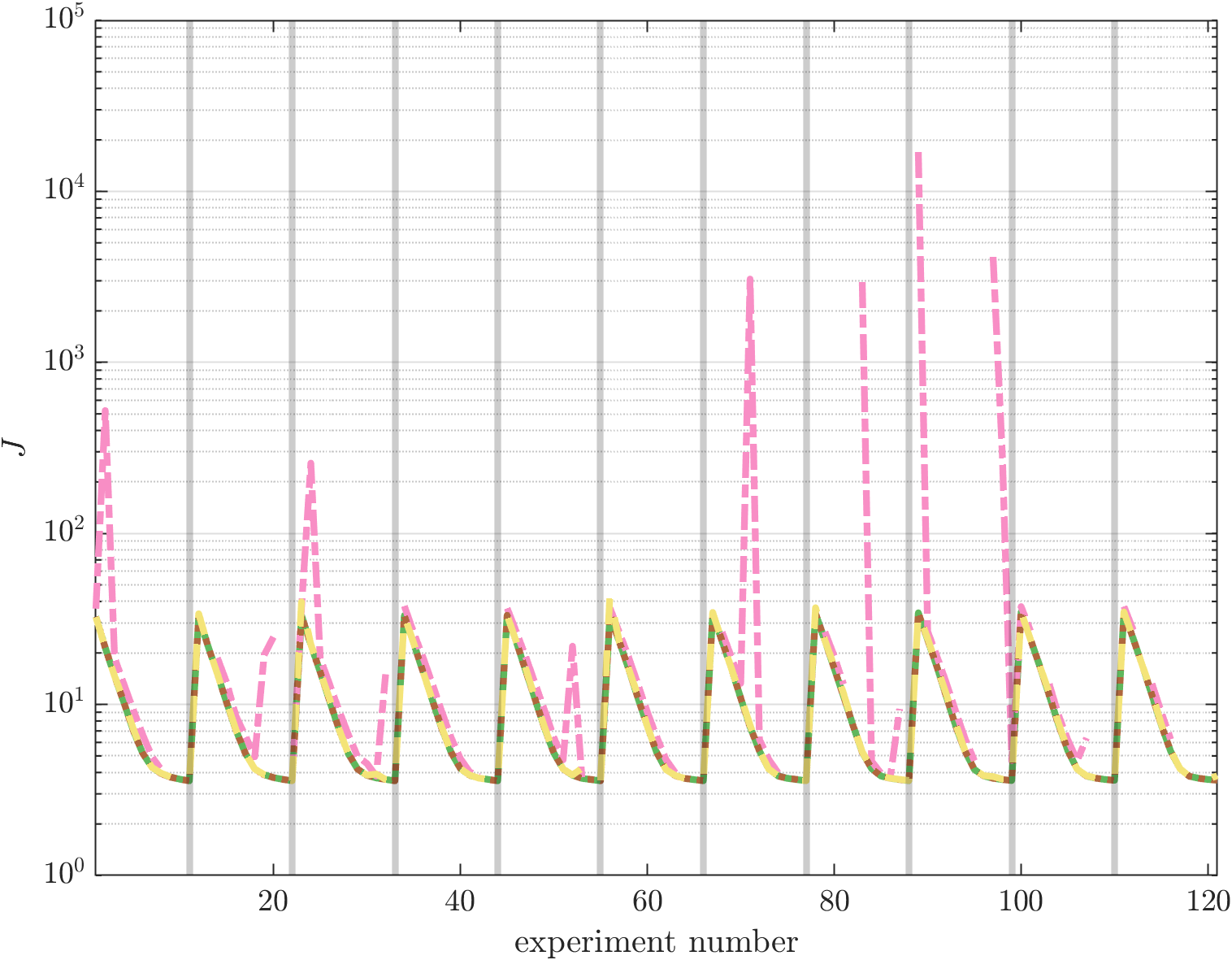}
      \caption{objective}
      \label{fig:MovingMasses:benchmark:objective}
  \end{subfigure}%
  \caption{Performance profile and objective function comparison for the Moving Masses \ac{ocp}. The vertical lines on the right indicate a change in the number of discretization intervals from $N=50$ (experiments 1 through 10) up to $N=100$ (experiments 101 through 110). For a fixed $N$, the experiment time $T$ is varied between 2 (left) and 4 (right).}
  \label{fig:MovingMasses:benchmark}
\end{figure}

\section{Conclusion}\label{sec:summary}
In conclusion, the introduced solver \solverref\ was demonstrated to provide competitive solutions for quadratic programming problems with linear complementarity constraints. It offers user flexibility, including the choice of inherently different QP solvers on the lower level, that allow choosing between the tradeoff of robustness and high performance.

\newcommand{\rowspacing}{1.3}
\newcommand{\headerformats}{lrrrr}
\newcommand{\headers}{problem & best known & LCQPow qpOASES & LCQPow OSQP & Gurobi \
}

\ra{\rowspacing}
\begin{table}
    \centering
    \caption{Objective values for MacMPEC solutions obtained via \solverref~and Gurobi.}
    \label{tab:MacMPEC:LCQPow:Gurobi}
    \begin{tabular}{@{}\headerformats @{}}\toprule 
    \headers \\ \midrule
    bard1 & 17 & 25 & - & 17\\
    bard1m & 17 & 25 & - & 17\\
    bard2 & -6598 & -6598 & -6598 & -6598\\
    bilevel2 & -6600 & -6600 & -6600 & -6600\\
    bilevel2m & -6600 & -6600 & -6600 & -6600\\
    ex9.2.1 & 17 & 25 & - & 17\\
    ex9.2.2 & 100 & 1.00e+02 & 1.00e+02 & 100\\
    ex9.2.4 & 5.00e-01 & 5.00e-01 & 5.00e-01 & 5.00e-01\\
    ex9.2.5 & 5 & 9 & 9 & 5\\
    ex9.2.6 & -1 & -1 & -1 & -1\\
    ex9.2.7 & 17 & 25 & - & 17\\
    flp2 & 0 & 2.34e-12 & 2.34e-12 & 2.34e-12\\
    flp4.1 & 0 & -1.55e-15 & -1.55e-15 & 0\\
    flp4.2 & 0 & -2.22e-16 & -1.11e-15 & 0\\
    flp4.3 & 0 & 1.55e-15 & 1.55e-15 & 0\\
    flp4.4 & 0 & 6.66e-16 & -2.22e-15 & 0\\
    gauvin & 20 & 20 & 20 & 20\\
    hs044.i & 1.56e+01 & 6.25e-06 & - & 6.25e-06\\
    jr1 & 5.00e-01 & 5.00e-01 & 5.00e-01 & 5.00e-01\\
    jr2 & 5.00e-01 & 5.00e-01 & 5.00e-01 & 5.00e-01\\
    kth2 & 0 & -2.22e-16 & -2.22e-16 & 0\\
    kth3 & 5.00e-01 & 5.00e-01 & 5.00e-01 & 5.00e-01\\
    liswet1.050 & 1.40e-02 & 1.40e-02 & 1.40e-02 & 1.40e-02\\
    liswet1.100 & 1.37e-02 & 1.37e-02 & - & 1.37e-02\\
    liswet1.200 & 1.70e-02 & 1.70e-02 & - & 3.38e-02\\
    nash1a & 7.89e-30 & 5.20e-12 & 5.20e-12 & 4.71e-12\\
    nash1b & 7.89e-30 & 5.20e-12 & 5.20e-12 & 4.71e-12\\
    nash1c & 7.89e-30 & 5.20e-12 & 5.20e-12 & 4.71e-12\\
    nash1d & 7.89e-30 & 5.20e-12 & 5.20e-12 & 4.71e-12\\
    nash1e & 7.89e-30 & 5.20e-12 & 5.20e-12 & 4.71e-12\\
    portfl1 & 1.50e-05 & 2.04e-02 & 2.04e-02 & 2.04e-02\\
    portfl2 & 1.46e-05 & 2.78e-02 & 2.78e-02 & 2.79e-02\\
    portfl3 & 6.27e-06 & 2.28e-02 & 2.27e-02 & 2.31e-02\\
    portfl4 & 2.18e-06 & 2.05e-02 & 2.05e-02 & 2.21e-02\\
    portfl6 & 2.36e-06 & 2.40e-02 & 2.40e-02 & 2.16e-01\\
    qpec1 & 80 & 80 & 80 & 80\\
    qpec2 & 45 & 4.50e+01 & 4.50e+01 & 45\\
    scholtes3 & 5.00e-01 & 5.00e-01 & 5.00e-01 & 5.00e-01\\
    sl1 & 1.00e-04 & 1.00e-04 & 1.00e-04 & 1.00e-04\\
    \bottomrule
    \end{tabular}
\end{table}

\renewcommand{\headerformats}{lrrrrr}
\renewcommand{\headers}{problem & best known & penalty & smoothed & relaxed & NLP\
}

\ra{\rowspacing}
\begin{table}
    \centering
    \caption{Objective values for MacMPEC solutions obtained by the IPOPT variants.}
    \label{tab:MacMPEC:IPOPT}
    \begin{tabular}{@{}\headerformats @{}}\toprule 
    \headers \\ \midrule
    bard1 & 17 & 17 & 17 & 25 & 17\\
    bard1m & 17 & 17 & 17 & 25 & 17\\
    bard2 & -6598 & -6598 & -6.60e+03 & -6598 & -6598\\
    bilevel2 & -6600 & -6600 & -6600 & -6600 & -6600\\
    bilevel2m & -6600 & -6600 & -6600 & -6600 & -6600\\
    ex9.2.1 & 17 & 17 & 17 & 25 & 17\\
    ex9.2.2 & 100 & 1.00e+02 & 1.00e+02 & 1.00e+02 & 1.00e+02\\
    ex9.2.4 & 5.00e-01 & 5.00e-01 & 5.00e-01 & 5.00e-01 & 5.00e-01\\
    ex9.2.5 & 5 & 9 & 5 & 9 & 9.80e+00\\
    ex9.2.6 & -1 & -1 & -1.00e+00 & -1 & -1\\
    ex9.2.7 & 17 & 17 & 17 & 25 & 17\\
    flp2 & 0 & 2.34e-12 & 8.87e-12 & 2.34e-12 & 7.60e-12\\
    flp4.1 & 0 & -2.70e-09 & 7.02e-06 & -2.70e-09 & -2.70e-09\\
    flp4.2 & 0 & -5.40e-09 & 6.32e-06 & -5.40e-09 & -5.40e-09\\
    flp4.3 & 0 & -6.30e-09 & 7.90e-06 & -6.30e-09 & -6.30e-09\\
    flp4.4 & 0 & -9.00e-09 & 1.18e-05 & -9.00e-09 & -9.00e-09\\
    gauvin & 20 & 20 & 325 & 20 & 20\\
    hs044.i & 1.56e+01 & 6.25e-06 & 1.56e+01 & 6.25e-06 & 1.56e+01\\
    jr1 & 5.00e-01 & 5.00e-01 & 5.00e-01 & 5.00e-01 & 5.00e-01\\
    jr2 & 5.00e-01 & 5.00e-01 & 5.00e-01 & 5.00e-01 & 5.00e-01\\
    kth2 & 0 & -9.00e-11 & 1.00e-07 & -9.00e-11 & -8.72e-11\\
    kth3 & 5.00e-01 & 5.00e-01 & 5.00e-01 & 5.00e-01 & 5.00e-01\\
    liswet1.050 & 1.40e-02 & 1.40e-02 & 1.30e-01 & 1.40e-02 & 1.40e-02\\
    liswet1.100 & 1.37e-02 & 1.37e-02 & 2.40e-01 & 1.37e-02 & 1.37e-02\\
    liswet1.200 & 1.70e-02 & 1.70e-02 & 4.75e-01 & 1.70e-02 & 1.70e-02\\
    nash1a & 7.89e-30 & 1.27e-11 & 4.71e-12 & 5.59e-12 & 4.71e-12\\
    nash1b & 7.89e-30 & 1.25e-11 & 4.71e-12 & 5.59e-12 & 4.71e-12\\
    nash1c & 7.89e-30 & 1.25e-11 & 4.71e-12 & 5.59e-12 & 4.71e-12\\
    nash1d & 7.89e-30 & 1.27e-11 & 4.71e-12 & 5.59e-12 & 4.71e-12\\
    nash1e & 7.89e-30 & 5.55e-12 & 4.71e-12 & 5.59e-12 & 4.71e-12\\
    portfl1 & 1.50e-05 & 2.04e-02 & 2.06e-02 & 2.04e-02 & 2.04e-02\\
    portfl2 & 1.46e-05 & 2.78e-02 & 2.97e-02 & 2.78e-02 & 2.78e-02\\
    portfl3 & 6.27e-06 & 2.27e-02 & 2.27e-02 & 2.27e-02 & 2.28e-02\\
    portfl4 & 2.18e-06 & 2.05e-02 & 2.06e-02 & 2.05e-02 & 2.05e-02\\
    portfl6 & 2.36e-06 & 2.40e-02 & 2.46e-02 & 2.40e-02 & 2.40e-02\\
    qpec1 & 80 & 80 & 8.00e+01 & 80 & 80\\
    qpec2 & 45 & 4.50e+01 & 4.50e+01 & 4.50e+01 & 4.50e+01\\
    scholtes3 & 5.00e-01 & 5.00e-01 & 5.00e-01 & 5.00e-01 & 1.00e+00\\
    sl1 & 1.00e-04 & 1.00e-04 & 1.00e-04 & 1.00e-04 & 1.00e-04\\
    \bottomrule
    \end{tabular}
\end{table}


\newpage
\bibliographystyle{ieeetr}
\bibliography{bibtex/biblio}

\end{document}